\newtheorem{theorem}{Theorem}[section]
\newtheorem{proposition}[theorem]{Proposition}
\theoremstyle{definition}
\newtheorem{example}[theorem]{Example}
\theoremstyle{remark}
\newtheorem{remark}[theorem]{Remark}
\numberwithin{equation}{section}
\newcommand{\dd}{\mathbf d}
\newcommand{\F}{\mathcal F}
\newcommand{\cH}{\mathcal H}
\renewcommand{\P}{\mathbb P}
\newcommand{\M}{\mathcal M}
\newcommand{\D}{\mathcal D}
\newcommand{\DD}{\mathbb D}
\newcommand{\N}{\mathbb N}
\newcommand{\I}{\mathcal I}
\newcommand{\A}{{\mathbb A}}
\newcommand{\cA}{\mathcal A}
\newcommand{\cR}{\mathcal R}
\newcommand{\cN}{\mathcal N}
\newcommand{\cC}{\mathcal C}
\newcommand{\cL}{\mathcal L}
\newcommand{\C}{\mathbb C}
\newcommand{\sC}{\mathcal C}
\newcommand{\Z}{\mathbb Z}
\renewcommand{\O}{\mathcal O}
\renewcommand{\L}{\mathbb L}
\newcommand{\Q}{\mathbb Q}
\newcommand{\wt}{{\rm wt}}
\newcommand{\half}{\frac{1}{2}}
\newcommand{\cyt}{Calabi--Yau threefold}
\newcommand{\cy}{Calabi--Yau}
\newcommand{\Hilb}{\mathop{\rm Hilb}\nolimits}
\newcommand{\IC}{\mathop{\rm IC}\nolimits}
\newcommand{\Tors}{\mathop{\rm Tors}\nolimits}
\newcommand{\NCHilb}{\mathop{\rm NCHilb}\nolimits}
\newcommand{\GL}{\mathop{\rm GL}\nolimits}
\newcommand{\SU}{\mathop{\rm SU}\nolimits}
\newcommand{\BGL}{\mathop{\rm BGL}\nolimits}
\newcommand{\SL}{\mathop{\rm SL}\nolimits}
\newcommand{\Tr}{\mathop{\rm Tr}\nolimits}
\newcommand{\Coh}{\mathop{\rm Coh}\nolimits}
\newcommand{\Pic}{\mathop{\rm Pic}\nolimits}
\newcommand{\Perv}{\mathop{\rm Perv}\nolimits}
\newcommand{\Const}{\mathop{\rm Const}\nolimits}
\newcommand{\Crit}{\mathop{\rm Crit}\nolimits}
\newcommand{\Hom}{\mathop{\rm Hom}\nolimits}
\newcommand{\Gr}{\mathop{\rm Gr}\nolimits}
\newcommand{\red}{\mathrm{red}}
\newcommand{\rat}{\mathrm{rat}}
\newcommand{\Mod}{\mathrm{mod}}
\newcommand{\MHM}{\mathop{\rm MHM}\nolimits}
\newcommand{\MMHM}{\mathop{\rm MMHM}\nolimits}
\newcommand{\MHS}{\mathop{\rm MHS}\nolimits}
\newcommand{\MMHS}{\mathop{\rm MMHS}\nolimits}
\newcommand{\Tot}{\mathrm{Tot}}
\newcommand{\Ext}{\mathrm{Ext}}
\newcommand{\vir}{\mathrm{vir}}
\begin{document}

\title[Cohomological Donaldson--Thomas theory]{Cohomological Donaldson--Thomas theory}

\author{Bal\'azs Szendr\H oi}
\address{Mathematical Institute, University of Oxford}
\email{szendroi@maths.ox.ac.uk}


\date{November 2014}

\begin{abstract} This review gives an introduction to cohomological Donald\-son--Thomas theory: the study of a 
cohomology theory on moduli spaces of sheaves on Calabi--Yau threefolds, and of complexes in 3-Calabi--Yau categories, 
categorifying their numerical DT invariant. Local and global aspects of the theory are both covered, including representations
of quivers with potential. We will discuss the construction of the DT
sheaf, a nontrivial topological coefficient system on such a
moduli space, along with some cohomology computations. The Cohomological
Hall Algebra, an algebra structure on cohomological DT
spaces, will also be introduced. The review closes with some recent appearances,
and extensions, of the cohomological DT story in the theory of knot invariants, of cluster algebras, and elsewhere. 
\end{abstract}    

\maketitle

\tableofcontents

\section*{Introduction}

Cohomological Donaldson--Thomas theory is the cohomological study of moduli spaces of sheaves on three-dimensional 
Calabi--Yau varieties. In its most developed form, it is a theory tied to a specific class
of geometries in a specific number of dimensions. It is hoped that the richness of the theory to be presented
will alleviate concerns over the specialized starting point. 

The aim of this review is to give a tutorial introduction to this subject, at a level accessible to graduate students 
wanting to get a glimpse and looking for pointers to the literature on specific topics. 
The discussion will start with an overview of {\em numerical} DT theory, effectively the Euler characteristic
shadow of the full theory, which was also discussed recently in the 
review~\cite{PT_13}, in particular its Sections $3\frac{1}{2}$ and  $4\frac{1}{2}$. While I~will not assume 
knowledge of~\cite{PT_13}, those hoping to learn about different facets of the subject 
should peruse both this review and~\cite{PT_13}. 

We will mostly work in the language of schemes, all schemes assumed to be of finite type over the base $\C$ of complex numbers. 
Algebraic stacks will make the occasional appearance, but most of our stacks will be global quotient stacks of the form $[M/G]$ with 
$G$ an affine algebraic group acting on a scheme $M$ without a geometric (scheme) quotient. It has recently become clear that 
the most natural context for the subject is that of derived symplectic geometry~\cite{PTVV}; because of (lack of) expertise, 
and the intention of keeping technicalities to a necessary minimum, I~will not give a full treatment of derived geometry, restricting
instead to the occasional side remark. The same applies to the main source of motivation for the historical 
development of this area of research: supersymmetric physics. 

\subsection*{Acknowledgements}

This review is a write-up of my lectures at the String Math Summer School held at the University of British Columbia
in June 2014. I~would like to thank its organizer Jim Bryan for the invitation to speak there. 
Thanks are also due to Kai Behrend, Tom Bridgeland, Jim Bryan, Ben Davison, Ian Grojnowski,
Dominic Joyce, Alastair King, Maxim Kontsevich, Davesh Maulik, Andrew Morrison,
Sergei Mozgovoy, Rahul Pandharipande, Yan Soibelman and
Richard Thomas for many valuable conversations on DT theory over the
years. Support from the Leverhulme Trust (Royal Society Leverhulme Trust Senior
Research Fellowship) and EPSRC (Programme Grant EP/I033343/1) during the
preparation of this review is also gratefully acknowledged.

\section{An overview of DT theory}
\label{sec:intro}

\subsection{Calabi--Yau threefolds}

We work over the field of complex numbers. 
A {\em Calabi--Yau threefold} is a quasiprojective three-dimensional non-singular 
algebraic variety $Y$ over $\C$, whose canonical (dualizing) line bundle $\omega_Y$ is trivial: $\omega_Y\cong \O_Y$. 
Sometimes simple connectedness of $Y$ or the weaker condition $H^1(\O_Y)=0$ will be assumed also, but we will not need 
this in general. As examples, it will be useful to keep the following list in mind, with varying amounts of projectivity. 
\begin{enumerate}
\item We can require $Y$ to be projective; there are hundreds of millions of families of examples, starting with the quintic
threefold $Y=Y_5\subset\P^4$. This will be the hardest case for the general theory to handle, so we will often study 
simpler geometries first.
\item Let $S$ be an algebraic surface, then $Y=\Tot(\omega_S)$, the total space of the canonical line bundle of $S$,
is a Calabi--Yau threefold. A special case is $S=\P^2$, with $Y=\Tot(\O_{\P^2}(-3))$.  
\item Let $C$ be a projective curve, and $\cL_1, \cL_2$ line bundles on $C$. Then the total space
$Y=\Tot(\cL_1\oplus\cL_2)$ is a Calabi--Yau threefold if and only if $\cL_1\otimes\cL_2\cong\omega_C$. A much-studied
special case is the resolved conifold geometry, when $C=\P^1$ and $\cL_i=\O_{\P^1}(-1)$. We will denote this space
by $Y=\O_{\P^1}(-1,-1)$. 
\item The simplest Calabi--Yau threefold of all, by our definition, is $Y=\C^3$.
\item A more general construction, giving some of the examples from (2)-(4) and others, is the following. Let $\Gamma<\SL(3,\C)$ 
be a finite group, and $\bar Y=\C^3/\Gamma$ the quotient, a singular variety with Gorenstein singularities 
and $\omega_{\bar Y}\cong\O_{\bar Y}$. Then there exist, usually several, quasi-projective Calabi--Yau resolutions 
$Y_i\to \bar Y$. A further generalization is to consider resolutions of 
other varieties $\bar Y$ with Gorenstein singularities and trivial canonical bundle. 
\end{enumerate}

\subsection{Sheaves on Calabi--Yau threefolds and DT invariants}

Let us for the time being restrict to projective Calabi--Yau threefolds $Y$ satisfying the extra condition
$H^1(\O_Y)=0$. One set of traditional, topological
invariants attached to such a geometry $Y$ are the topological Betti numbers $b_i(Y)=\dim H^i(Y, \Q)$ and the Hodge
numbers $h^{p,q}(Y)=\dim H^q(Y, \Omega_Y^p)$, the dimension of the coherent cohomology of $Y$ with coefficients
in the sheaf of holomorphic $p$-forms. Under the assumption $H^1(\O_Y)=0$, these two sets of numbers determine each other, 
so only depend on the underlying differentiable manifold. 
The initial aim of Donaldson--Thomas theory~\cite{T, DT} was to define more general
numerical invariants of a Calabi--Yau threefold $Y$, which are sensitive to its complex structure, while still being
invariant in the sense that they are unchanged under small deformations of the complex structure. These invariants
are attached to various moduli spaces of sheaves $\M=\M(Y)$ on $Y$.

To be slightly more precise, let us fix a cohomology class $\alpha\in H^{\rm even}(Y, \Q)$. Then we can consider the moduli {\em stack}
$\M(Y, \alpha)$ of coherent sheaves on~$Y$ with Chern character~$\alpha$. This is a non-Hausdorff Artin stack, usually of infinite
type, and as such does not admit sensible numerical invariants. Imposing a {\em stability condition} $\sigma$ may, in an ideal 
situation, give a proper, finite type and Hausdorff open substack $\M^\sigma(Y, \alpha)\subset \M(Y, \alpha)$ 
parametrizing $\sigma$-stable sheaves, with a {\em coarse moduli scheme} which by abuse of notation I~will 
also denote  $\M^\sigma(Y, \alpha)$. (Let me be vague for now on what sort of stability conditions one might impose.)
This moduli scheme of $\sigma$-stable sheaves 
is a better candidate for defining numerical invariants, but there is still a serious issue: in all but a handful 
of cases, $\M^\sigma(Y, \alpha)$ has very bad singularities. Its tangent space at a point $[E]\in \M^\sigma(Y, \alpha)$ 
corresponding to a
coherent sheaf $E$ on $Y$ is $T_{[E]} \M^\sigma(Y, \alpha) \cong \Ext^1(E,E)$, but there is a complicated higher-order map, the 
Kuranishi map, from this tangent space to the {\em obstruction space} $\Ext^2(E,E)$, whose zero set gives a local model for 
$\M^\sigma(Y, \alpha)$ near its point~$[E]$. 

The special feature for Calabi--Yau threefolds is that the tangent and obstruction spaces are naturally dual using
Serre duality and $\omega_Y\cong\O_Y$: 
\[\Ext^2(E,E) \cong \Ext^1(E,E\otimes\omega_Y)^\vee \cong\Ext^1(E,E)^\vee.
\] 
This observation was the starting point DT theory, formalized in the result~\cite{T} that
$\M^\sigma(Y, \alpha)$ carries a {\em perfect obstruction theory}~\cite{BF1} of dimension $0$, in fact a symmetric obstruction theory~\cite{BF}. 
A dimension $0$ perfect obstruction theory gives rise to a virtual fundamental class~\cite[Appendix]{PT_13}
\[ [\M^\sigma(Y, \alpha)]^\vir \in H_0(\M^\sigma(Y, \alpha), \Z),
\]
whose degree 
\[ \deg[(\M^\sigma(Y, \alpha))]^\vir\in \Z
\]
is the sought-for numerical invariant, the {\em DT invariant} of $Y$ in the class $\alpha$ and stability $\sigma$. 
It is important to understand that this is a {\em global} construction of the invariant, 
requiring $\M^\sigma(Y, \alpha)$ to be proper. 

It was proved by Thomas~\cite{T} that this quantity is indeed an invariant of the underlying threefold in the sense
explained above: if we fix a smooth family $\{Y_t\}$ of projective Calabi--Yau threefolds, a cohomology class $\alpha$ on the 
underlying smooth manifold, and a stability condition $\sigma$, the resulting number is constant in the family. 

The right stability condition to choose depends on the specifics of the situation under study. One option involves 
choosing some standard stability condition for sheaves such as Gieseker stability with respect to an ample line bundle. 
The numerical DT invariants change under change of stability condition following precise {\em wall-crossing relations},
as has been established as the outcome of extensive research whose highlights include~\cite{J1, JS, KS1}.

\begin{example} The equality of the dimensions of the tangent and obstruction spaces of the moduli scheme $\M^\sigma(Y, \alpha)$
at all its points says that its {\em virtual dimension} is $0$. This suggests that in sufficiently general,
transversal situations, $\M^\sigma(Y, \alpha)$ might consist of a finite number of points, perhaps even reduced points. 
In the latter case, and only in that case, the DT invariant is simply the number of points. While in algebraic
geometry such transversality is extremely rare, this indeed being the point of introducing the virtual 
fundamental class, there are some interesting cases, discussed
in~\cite[Thm. 3.55 and Sect. 4]{T}, where generically the moduli scheme is a 
finite set of reduced points. See Example~\ref{ex:ruledsurface:again}
below for another example. On the other hand, as soon as the DT
invariant is negative, such as in the explicit examples in Section~\ref{sec:macmahon} and
Example~\ref{ex:ruledsurface}, this cannot happen. 
\end{example} 

\begin{example} \label{ex_rank1}
For the case of sheaves of rank $1$, one can be more explicit about
the stability condition involved.  
Given a torsion-free sheaf $E$ of rank $1$ on $Y$, there is an injective map of sheaves $E\to \det(E)$ to the determinant
of $E$, which must be an invertible sheaf. Up to tensoring by this invertible sheaf, we can assume that the determinant is
trivial, and then we get an embedding $E\to\O_Y$ with cokernel the structure sheaf $Z\subset Y$ of a subscheme of $Y$, necessarily
supported in codimension two. The parameter space of such embedded subschemes is the {\em Hilbert scheme} of 
codimension-two subschemes of $Y$; under the assumption $H^1(\O_Y)=0$, the moduli scheme of sheaves is isomorphic to the Hilbert scheme and
is in particular separated and of finite type (for fixed topological invariants). See~\cite[Section $3\half$]{PT_13} for further
discussion of this important special case, which plays a major role in the historical development of the subject, 
as well as being responsible for important connections to other counting problems. 
\end{example}

\begin{remark} If the moduli problem of sheaves of class $\alpha$ involves $\sigma$-semistable objects, then the situation
changes radically. In this case, $\M^\sigma(Y, \alpha)$ is only a coarse moduli space, and its tangent space (as a scheme) can no longer be 
identified simply as an $\Ext$-group. In this case, DT invariants have to be defined by a complicated roundabout procedure~\cite{JS}, and 
they take values in $\Q$. We refer for more details about the issues, and their solution, to~\cite{JS}.
\end{remark} 

\subsection{Local interpretation: the Behrend function}

The above definition of the DT invariant was global: it needed a proper moduli space $\M=\M^\sigma(Y, \alpha)$, 
so that the degree of its virtual fundamental class 
is defined. In the case of invariants attached to symmetric obstruction theories, 
Behrend~\cite{Be} gave a radically different, local interpretation. What this work showed was that given any scheme $X$, 
associated to $X$ is a canonical {\em constructible} function $\nu_X\colon X\to \Z$,
defined in terms of local data, at a nonsingular point of $X$ its value being $(-1)^{\dim X}$. 
We can then define the {\em virtual Euler characteristic} of $X$ to be the integer
\[\chi_\vir(X) = \int_{X} \nu_X d\chi\in \Z.
\]
The latter integral is the integral of the constructible function $\nu_X$ against the (usual, topological) Euler characteristic, 
defined by the formula
\[\int_{X} \nu_X d\chi = \sum_{n\in \Z} n\cdot \chi\left(\nu_X^{-1}(n)\right).\]

We then have the following important result.

\begin{theorem} \cite{Be} Let $\M$ be a proper scheme equipped with a symmetric perfect obstruction theory, such as a proper moduli space
of stable sheaves on a projective Calabi--Yau threefold, so that $\M$ has a virtual fundamental class $[\M]^\vir$. Then
\[ \deg[\M]^\vir  = \chi_\vir(\M).
\]
\end{theorem}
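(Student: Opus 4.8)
The plan is to reduce both sides to microlocal intersection theory on a cotangent bundle and to identify them through MacPherson's local Euler obstruction. The first task is to pin down the constructible function $\nu_\M$ concretely. Following Behrend, I~would define $\nu_\M = \mathrm{Eu}(\mathfrak{c}_\M)$, where $\mathfrak{c}_\M=\sum_i (-1)^{\dim Z_i} m_i[Z_i]$ is the signed cycle of the support of the intrinsic normal cone $\mathfrak{C}_\M$ (with $Z_i$ the images in $\M$ of its irreducible components and $m_i$ their multiplicities), and $\mathrm{Eu}$ is MacPherson's local Euler obstruction regarded as a map from cycles to constructible functions. One checks the normalization directly: at a nonsingular point the cone is the zero section and the local Euler obstruction of a smooth variety is identically $1$, recovering $\nu_\M=(-1)^{\dim\M}$.

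Next I~would recall, from the Behrend--Fantechi machinery, the cone-theoretic formula for the virtual class. The symmetric obstruction theory $E^\bullet$ furnishes a closed embedding of $\mathfrak{C}_\M$ into the vector bundle stack $E_1=h^1((E^\bullet)^\vee)$, and $[\M]^\vir = 0^!_{E_1}[\mathfrak{C}_\M]$, the refined Gysin pullback along the zero section. Thus $\deg[\M]^\vir$ is an intersection number of the cone $\mathfrak{C}_\M$ against the zero section of the obstruction bundle. The decisive structural input is the symmetry $E^\bullet\cong(E^\bullet)^\vee[1]$: it identifies $E_1$ with the dual of the tangent bundle, so that locally $E_1$ models a cotangent bundle and $\mathfrak{C}_\M$ sits inside it as a \emph{conic Lagrangian} cycle.

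The heart of the argument is then the microlocal index theorem. MacPherson's natural transformation $c_*$ from constructible functions to Chow groups satisfies, for proper $\M$, that the degree of the zero-dimensional component of $c_*(\phi)$ equals $\int_\M\phi\,d\chi$; so it suffices to prove that $\deg[\M]^\vir$ equals the degree of the zero-dimensional component of $c_*(\nu_\M)$. Working in a smooth ambient chart $U$ with $\M\hookrightarrow U$, the conic Lagrangian cycle cut out by $\mathfrak{c}_\M$ inside $T^*U$ is precisely the characteristic cycle of $\nu_\M=\mathrm{Eu}(\mathfrak{c}_\M)$, by the defining relation between the Euler obstruction and conormal varieties. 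The Kashiwara--Dubson index formula equates the intersection of this characteristic cycle with the zero section of $T^*U$ to $\int\nu_\M\,d\chi$, while the very same intersection number computes $\deg 0^!_{E_1}[\mathfrak{C}_\M]=\deg[\M]^\vir$. Globalizing these local identities over $\M$ through the functoriality of $c_*$ then yields $\deg[\M]^\vir=\chi_\vir(\M)$.

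The main obstacle, as I~see it, is making the microlocal identification of the third step rigorous: that the refined Gysin pullback defining the virtual class coincides, intersection number for intersection number, with the characteristic-cycle index of $\nu_\M$. This needs the full force of MacPherson's theory together with the index theorem for characteristic cycles, and it crucially uses the symmetry to guarantee that the relevant cycle in $T^*U$ is conic Lagrangian so that the local Euler obstruction is exactly the correct weight; for a non-symmetric obstruction theory this breaks down, and $\deg[\M]^\vir$ would no longer equal the obstruction-theory-independent weighted Euler characteristic $\int_\M\nu_\M\,d\chi$. Promoting the chart-by-chart microlocal computation to the stacky cone $\mathfrak{C}_\M\subset E_1$, and controlling the deformation to the normal cone compatibly with these microlocal structures, is the most delicate technical point.
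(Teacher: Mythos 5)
The paper itself contains no proof of this theorem---it is quoted directly from Behrend's paper \cite{Be}---and your proposal is in all essentials a faithful reconstruction of Behrend's argument there: $\nu_\M=\mathrm{Eu}(\mathfrak{c}_\M)$ for the signed cycle of the intrinsic normal cone, the virtual class as Gysin pullback of the cone, the symmetry of the obstruction theory forcing that cone to be conic Lagrangian in a local cotangent model, and the MacPherson/Dubson--Kashiwara index theorem identifying the resulting intersection number with $\int_\M\nu_\M\,d\chi$. So your approach is correct and essentially the same as that of the cited source, where the globalization issue you flag is resolved by Behrend's intrinsic cycle-class formalism together with a global embedding (available for the projective moduli spaces in question).
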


The importance of this result is twofold. First of all, even if $\M$ is proper, it allows for stratification (``cut and paste'')
arguments since the right hand side (but not the left) behaves well under stratifications. In particular, the right hand side
is defined for any scheme, whether it is proper or not. This is the second advantage: the right hand side of this formula defines
DT invariants of non-compact Calabi--Yau threefolds as well. For such threefolds, we can still often define moduli spaces 
$\M=\M^\sigma(Y, \alpha)$, so long as we suitably restrict the set of objects under study. For example, we could ask for
moduli of sheaves which themselves have compact support inside $Y$. We then obtain 
finite type moduli schemes, but they are frequently non-compact. 
An extended example will be discussed in Section~\ref{sec:hilbC3}. 

\subsection{Invariants from other categories with $3$-Calabi--Yau properties}
\label{sec:3CYalg} 

The discussion above used moduli schemes, and defined numerical invariants, starting from the category $\sC=\Coh(Y)$ of coherent 
sheaves on a Calabi--Yau threefold~$Y$. We relied on three facts. The first, geometric
fact was that objects $E\in \sC$ have a well-defined moduli theory. The second, homological fact was Serre duality with trivial
dualizing sheaf, leading to the duality
of the tangent space $\Ext^1(E,E)$ and the obstruction space $\Ext^2(E,E)$. A final important requirement was that 
the category $\sC$ admits suitable notions of stability. 
One can hope to repeat the discussion for other categories $\cC$ with these properties.
We leave the most general definition to other sources~\cite{KS1}; we restrict to examples. 

\begin{enumerate}
\item For a Calabi--Yau threefold $Y$, we can consider $\sC=\D^b(\Coh Y)$, the {\em derived category} of coherent sheaves
on~$Y$. Serre duality works in exactly the same way as for sheaves. The most general notion of stability in this context is 
(Douglas-)Bridgeland stability~\cite{B}. 
\item Let $\cA$ be a finitely generated associative, not necessarily commutative algebra over $\C$. Let $\sC=\cA-\Mod$ be 
the category of finite dimensional right $\cA$-modules. Then the homological condition is satisfied if $\cA$ is 
a {\em 3-Calabi--Yau algebra}~\cite{G}. This means in particular that
\begin{enumerate}
\item for $M,N\in\cC$, $\Ext^i_\cA(M,N)$ is finite dimensional, and vanishes unless $0\leq i \leq 3$; 
\item for $M,N\in\cC$, there are perfect bifunctorial pairings
\[ \Ext^i_\cA(M,N)\times \Ext^{3-i}_\cA(N,M)\to\C.
\]
\end{enumerate}
As a subcase, one can consider $3$-Calabi--Yau algebras arising from {\em quivers with potential} (see Section~\ref{sec:quiv}); 
the objects in $\cA$ then readily admit a moduli theory, and (King) stability conditions~\cite{K}.
\end{enumerate}
DT invariants in these situations have been defined in~\cite{KS1}, respectively~\cite{Sz1, MR, KS2}.

\begin{example} An important special case of (1) is the stable pairs theory of Pandharipande and Thomas~\cite{PT}. 
This is a variant of Example~\ref{ex_rank1}, where we parametrize objects of $\D^b(\Coh Y)$ which are generically
ideal sheaves of embedded codimension-two subschemes $Z\subset X$ (curves), but with a stability condition different 
from considering torsion-free sheaves. A detailed discussion of this story can be found in~\cite[Sect. $4\half$]{PT_13}.
\end{example}

\begin{example} Interesting examples of (2) come from derived equivalences of the form 
\[ \D_c^b(\Coh Y) \cong \D^b(\cA-\Mod),
\]
for $Y$ a (noncompact) Calabi--Yau threefold, $\D_c^b(\Coh Y)$ the derived category of coherent sheaves with compact support 
on $Y$ and $\cA$ a finitely-generated commutative, usually non-commutative algebra. Such examples arise in the context
of the McKay correspondence~\cite{BKR}, and more generally for resolutions of three-dimensional 
Gorenstein singularities~\cite{vdB}. 
\end{example}

\subsection{Generating series of DT
  invariants and BPS invariants} \label{sec:genseriesDT} More than on individual invariants, the emphasis
of the subject has been on the study of generating series of the DT invariants. One reason for this is the origin
of these investigations in string theory, where it is natural to form generating series (or partition functions) of 
individual invariants, and study these as functions of the variables. In mathematical terms, alongside fixing the Calabi--Yau 
geometry $Y$ and a stability parameter $\sigma$, we also fix an affine sublattice $\Lambda\subset H^{\rm even}(Y, \Q)$ of rank $n$. 
We then consider an expression of the form
\[ Z_{Y, \Lambda}({\mathbf t}) = \sum_{\alpha\in\Lambda} \chi_\vir\left(\M^\sigma(Y, \alpha)\right) {\mathbf t}^\alpha\in \Z[[{\mathbf t}]].
\] 
Here ${\mathbf t}=\{t_1, \ldots, t_n\}$ is a set of auxiliary variables, and ${\mathbf t}^\alpha$ is multi-index notation. 

It was conjectured by~\cite{KS1, JS}, based on earlier calculations and physical considerations~\cite{GV}, 
that the generating series
of DT invariants $Z_{Y, \Lambda}({\mathbf t})$ take the form of certain infinite products with integral exponents 
$\Omega^\sigma(\alpha)\in \Z$. These invariants, sometimes called {\em
  BPS invariants}, are the fundamental structural quantities
governing DT generating series and their wall-crossing. The name comes from their interpretation as the number of BPS states
(states saturating a bound on their mass) in certain supersymmetric theories. The integrality of $\Omega^\sigma(\alpha)$ remains 
conjectural in general. See~\cite[Section $2\half$]{PT} for the parallel story in Gromov--Witten theory. See also
Section~\ref{subsec_COHA_pot} below.

\subsection{Categorifying DT invariants}\label{sec:catDT}
Given a moduli space of stable sheaves (or objects) $\M$ on a Calabi--Yau threefold (or CY3 category), 
the theory explained so far associates an integer invariant  
$\chi_\vir(\M)$ to $\M$, which, as explained above, can be expressed as a weighted Euler characteristic. The next question one can
ask is whether this construction can be {\em categorified}. The first level of 
categorification would associate to $\M$ some finite-dimensional {\em graded vector space} ${\mathcal H}^*(\M)$ so that 
\begin{equation}\label{eq_euler_char}\chi_\vir(\M) = \sum_j (-1)^j \dim {\mathcal H}^j(\M).\end{equation}
If $\M$ is nonsingular, with the Behrend function identically $(-1)^{\dim M}$, then we could just take for ${\mathcal H}^*(\M)$ 
the (shifted) classical cohomology $H^*(\M, \Q_\M[\dim \M])$. 

In the following sections, I~will explain the construction of the graded vector space ${\mathcal H}^*(\M)$ in general, 
starting from the moduli interpretation of $\M$. This vector space
will depend on more than just the scheme structure of $\M$; in modern 
language, we need to retain the derived moduli scheme structure (and a version of a symplectic form), not just the classical scheme
structure. We will use a classical truncation below, which will give this structure entirely in non-derived terms. 

A next level of categorification would involve associating a {\em category} ${\mathcal C}(\M)$ 
to the moduli space $\M$, some linearization of ${\mathcal C}(\M)$ being ${\mathcal H}^*(\M)$. 
I~will make some comments about this below (see in particular Remark~\ref{rem_cat_cat}); 
in general, it is not currently known how to build such a category.

\begin{remark} Closely related to categorified DT invariants are {\em refined DT invariants}. Here
the invariant attached to a moduli space $\M$ still takes values in a ring (and not a category), but it retains more 
information about $\M$ than just a number. 
Refined invariants in DT theory were conjectured to exist in the theoretical physics literature first~\cite{HIV, IK, IKV}.  
These works exploited the idea
that in theories built from local Calabi--Yau threefolds, there 
are certain extra symmetries that allow for a refinement (or quantization) of
the numerical invariant $\chi_\vir(\M)\in \Z$ to an invariant in $\Z[q, q^{-1}]$. 
In the above language, a reasonable
guess might be to associate to~$\M$ the expression $\sum_j q^j \dim {\mathcal H}^j(\M)$. As we will explain below 
(see Section~\ref{subsec:refined}), 
this turns out to be the wrong thing to do in general, for reasons of Hodge theory, but the correct answer is closely related. 

DT invariants can be defined in the most general ring where the cut-and-paste relation 
satisfied by an Euler characteris\-tic-type invariant holds, which is some version of the {\em Grothendieck ring of 
varieties}~$K({\rm Var}_\C)$. Invariants taking values in such rings are referred to as~{\em motivic DT invariants}, 
introduced in~\cite{KS1}. We will return to this point later. 
\end{remark} 

\section{A salient example}
\label{sec:hilbC3}

I~now introduce one salient example, in some sense the simplest infinite family of moduli 
spaces of sheaves on a Calabi--Yau threefold. Many of the features of the general theory appear here
in transparent form. The \cyt\ in question is simply affine three-space $Y=\C^3$. As we saw in the previous 
section, to be able to talk about moduli spaces, we need to assume some compactness of the objects we 
study; so in this example, we will be parametrizing collections of points of affine three-space. In other words, we will 
work in the context of Example~\ref{ex_rank1}, parametrizing 
rank-one torsion-free sheaves on $Y=\C^3$ with trivial determinant and compactly 
supported quotient. 

\subsection{The Hilbert scheme of points}

There are different ways of parametrizing collections of (unlabelled) points of a variety $Y$. The simplest way is 
to consider the symmetric product
\[ S(Y) = \bigsqcup_{n=1}^\infty S^n(Y) 
\]
where $S^n$ is the $n$-th symmetric product
\[ S^n(Y) = Y^n / \Sigma_n,
\]
the quotient by the symmetric group $\Sigma_n$ of the $n$-fold Cartesian product of $Y$. A point in $S(Y)$ 
corresponds to a collection of points of $Y$ with multiplicities. However, $S(Y)$ is not a moduli space: its points do not
parametrize objects on $Y$ which vary in a flat deformation family. From this point of view, 
better behaved is the {\em Hilbert scheme (of points)} of $Y$
\[ \Hilb(Y) = \bigsqcup_{n=1}^\infty \Hilb^n(Y).
\]
Here as a set, $\Hilb^n(Y)$, the $n$-th Hilbert scheme (of points) of $Y$, is given by
\[\Hilb^n(Y) = \{ \I \lhd \O_Y \colon \dim H^0(\O_Y/\I) = n\}.
\]
$\Hilb(Y)$ parametrizes flat families of finite-codimension ideals of the structure sheaf $\mathcal O_Y$ 
with fixed codimension; by a theorem of Grothendieck, its connected components $\Hilb^n(Y)$ are
(represented by) a separated scheme, which are quasiprojective if $Y$ is. 
It is well known that $\Hilb(Y)$ is nonsingular if $\dim Y\leq 2$; the first case when singularities
appear on the Hilbert scheme is when $\dim Y=3$. Starting in this dimension, the singularities are
really severe for large number of points $n$; in particular the $\Hilb^n(Y)$ are reducible, and not known to be reduced. 

\subsection{The Hilbert scheme of points of $\C^d$}

Let us work out in detail what the definition of $\Hilb^n(Y)$ says for the specific case of affine space $Y=\C^d$. 
I~am using set-theoretic notation, but it is easy to check that all the steps can be performed in families. 
First, since $\C^d$ is affine, we can forget about sheaves and think about rings instead.
\[\begin{array}{rcl}\Hilb^n(\C^d) & = & \{ \I \lhd \O_{\C^d} \colon \dim H^0(\O_{\C^d}/\I) = n\}\\
& = & \{ I \lhd \C[\O_{\C^d}] \colon \dim \C[\O_{\C^d}]/I = n\}\\
& = &\{ I \lhd \C[x_1,\ldots,x_d] \colon \dim \C[x_1,\ldots,x_d]/I = n\}.
\end{array}\]
Here $\C[\O_{\C^d}]$ is the ring of functions on affine $d$-space, and in the last line 
a set of affine coordinates were chosen. We can turn things round now: think of the $n$-dimensional 
vector space $V=\C[x_1,\ldots,x_d]/I$ as the primary object, which comes with a $\C[x_1,\ldots,x_d]$-module structure as
extra data. This can be encoded in operators $X_1,\ldots,X_d\in\GL(V)$, which are required to commute. Also, 
$\C[x_1, \ldots, x_d]/I$ is not an arbitrary module, but it is {\em cyclic}: it is generated as 
a $\C[x_1, \ldots, x_d]$-module by the image $v\in V$ of $1\in \C[x_1, \ldots, x_d]$. This vector $v$ 
is often called a {\em cyclic} or {\em framing vector}. 

Fixing an $n$-dimensional vector space~$V$, 
we can consider 
\[  M_n = \{ (X_1, \ldots, X_d, v)\colon [X_i, X_j]=0, \langle X_1, \ldots, X_d\rangle v = V\} \subset \GL(V)^d\times V,
\]
where $[\ ,\ ]$ denotes the standard commutator on matrices. Since the choice of $(X_i, v)$ is only determined 
up to automorphisms of $V$, we obtain a set-theoretic description 
\[  \Hilb^n(\C^d) = M_n/\GL(V).
\]

This concrete description turns out to be very useful; let us analyze it. Separate the data into two parts: 
there is the tuple $(X_1, \ldots, X_d, v)$ together with the 
cyclicity condition; and the relations $[X_i, X_j]=0$. The first part gives an auxiliary space
\[ N_n = \{ (X_1, \ldots, X_d, v)\colon \langle X_1, \ldots, X_d\rangle v = V\} \subset \GL(V)^d\times V, 
\]
the parameter space of cyclic modules for the free {\em non-commutative} $\C$-algebra generated by non-commuting 
variables $X_1, \ldots, X_d$, and its quotient
\[  \NCHilb^n_d = N_n/\GL(V).
\]
\begin{proposition}\label{prop:NCHilb}
The noncommutative Hilbert scheme $\NCHilb^n_d$ is a smooth quasi\-projec\-tive variety. The (commutative) Hilbert scheme $\Hilb^n(\C^d)$ is 
the subscheme of $\NCHilb^n_d$ cut out by the relations $[X_i, X_j]=0$.
\end{proposition}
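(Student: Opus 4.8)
The plan is to realise $\NCHilb^n_d$ as a geometric quotient of a smooth open subvariety by a \emph{free} $\GL(V)$-action, and then to cut out $\Hilb^n(\C^d)$ inside it by the (equivariant) commutator equations. Regarding the $X_i$ as endomorphisms of $V$, the ambient $\mathrm{End}(V)^d\times V$ is an affine space, hence smooth. I would first check that the cyclicity condition $\langle X_1,\ldots,X_d\rangle v=V$ is \emph{open}: writing $W_k=\mathrm{span}\{w(X)v:\mathrm{length}(w)\le k\}$, the chain $W_0\subseteq W_1\subseteq\cdots$ strictly increases until it stabilises, and once $W_k=W_{k+1}$ the space $W_k$ is $X_i$-invariant and contains $v$, so it equals $\langle X\rangle v$; since $\dim V=n$ this happens by $k=n-1$. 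Thus cyclicity is equivalent to the matrix with columns $w(X)v$, over words $w$ of length $<n$, having rank $n$, i.e. to the nonvanishing of a maximal minor. Hence $N_n\subset\mathrm{End}(V)^d\times V$ is a smooth open subvariety.

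Next I would verify that the $\GL(V)$-action on $N_n$ is free. If $g$ fixes a cyclic tuple $(X_1,\ldots,X_d,v)$, then $g$ commutes with each $X_i$ and $gv=v$, so $g\bigl(w(X)v\bigr)=w(X)(gv)=w(X)v$ for every word $w$; as these vectors span $V$ by cyclicity, $g=\mathrm{id}$. I would then produce the quotient via Geometric Invariant Theory: linearising the $\GL(V)$-action on $\mathrm{End}(V)^d\times V$ by a suitable character (a power of $\det^{\pm1}$), King's stability criterion~\cite{K} identifies the stable $=$ semistable locus with exactly the cyclic tuples $N_n$, since once $v$ generates, no proper invariant subspace can destabilise. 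The resulting geometric quotient $\NCHilb^n_d=N_n/\GL(V)$ is therefore quasiprojective, and because the action is free the quotient map is a principal $\GL(V)$-bundle, so smoothness of $N_n$ descends to $\NCHilb^n_d$.

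For the second assertion, I would use that the commutator maps $\mu_{ij}\colon N_n\to\mathrm{End}(V)$, $(X,v)\mapsto[X_i,X_j]$, are $\GL(V)$-equivariant for the conjugation action on the target, since $[gX_ig^{-1},gX_jg^{-1}]=g[X_i,X_j]g^{-1}$. Their common zero scheme $M_n\subset N_n$ is $\GL(V)$-invariant, so along the principal bundle $N_n\to\NCHilb^n_d$ it descends to a closed subscheme $M_n/\GL(V)\subset\NCHilb^n_d$. On closed points this is precisely the set of commuting cyclic tuples modulo $\GL(V)$, which is $\Hilb^n(\C^d)$ by the preceding discussion; I would then upgrade this to a scheme-theoretic identification by performing the construction in families and checking that both sides represent the same functor of flat families of ideals $I\lhd\O_{\C^d}$ of colength $n$.

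The main obstacle will be the quotient construction itself. Freeness and the descent of smoothness are formal once a geometric quotient is available, but the substantive input is showing that $N_n$ is genuinely the full GIT-(semi)stable locus for the chosen linearisation—so that the quotient exists as a \emph{quasiprojective} scheme with no strictly semistable points—where I would lean on King's framework~\cite{K}. A secondary point requiring care, though the author's remark that ``all the steps can be performed in families'' suggests it is routine here, is matching the GIT scheme structure on $M_n/\GL(V)$ with Grothendieck's scheme structure on $\Hilb^n(\C^d)$, rather than merely identifying closed points.
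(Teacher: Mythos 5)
Your proposal is correct and takes essentially the same route as the paper: the paper likewise realises $\NCHilb^n_d$ as a GIT quotient in which cyclicity is a pleasant choice of stability condition (citing Mumford--Suominen where you invoke King), deduces nonsingularity from the fact that cyclic tuples are stable with trivial automorphisms, and obtains the scheme-theoretic identification of the commutator locus with $\Hilb^n(\C^d)$ via the functor of flat families of finite-colength ideals, which it delegates to the proof of Nakajima's Theorem 1.9. The details you supply --- openness of cyclicity via the stabilising chain of subspaces, freeness of the $\GL(V)$-action, and equivariance of the commutator maps --- are precisely the steps the paper leaves implicit.
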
 
\begin{proof} Quasiprojectivity of $\NCHilb^n_d$ follows from the fact that $N_n/\GL(V)$ is a GIT quotient of the space of $d$-tuples of 
matrices under the action of $\GL(V)$; 
cyclicity corresponds to a particularly pleasant choice of stability condition~\cite[Sect. 1]{MS}. It is easy to see that cyclic 
triples are all stable with no non-trivial automorphisms, hence $\NCHilb^n_d$ is also nonsingular.
Finally, the commutation relations cut out the Hilbert scheme as a subscheme inside $\NCHilb^n_d$, in other words 
the scheme of commuting framed endomorphisms represents the functor of families of finite codimension
ideals in $\C[x_1, \ldots, x_d]$; the proof is given in the proof of~\cite[Thm. 1.9]{Nak}. 
\end{proof}

The space $\NCHilb^n_d$ is often called the {\em non-commutative Hilbert scheme}, the moduli space of codimension-$n$ ideals 
in the non-commutative ring $\C\langle X_1, \ldots, X_d\rangle$.  
For $d=2$, Proposition~\ref{prop:NCHilb} gives the well known description of the Hilbert scheme of the affine plane as the framed space of 
pairs of commuting matrices~\cite[Thm. 1.9]{Nak}.

\subsection{The Hilbert scheme of points of $\C^3$}

Let us now restrict to the case $d=3$, corresponding to the local Calabi--Yau threefold $Y=\C^3$. 
Consider the function $\widetilde W_n  \colon  N_n  \to  \C$ given by
\[
(X_1, X_2, X_3, v) \mapsto  \Tr X_1[X_2, X_3]. 
\]
From properties of $\Tr$ it follows that this map is more symmetric than it looks, being cyclically symmetric in the indices.
Now let us think about what is the degeneracy locus $\Crit(\widetilde W_n)$ of this function, the locus on $N_n$ given by the 
vanishing of the one-form ${\rm d}\widetilde W_n$. Forgetting the vector $v$ which does not enter into the function $\widetilde W_n$, 
local coordinates on $N_n$ are just the matrix entries of the $X_i$. In matrix entries, we have 
\[\widetilde W_n(X_1, X_2, X_3) = \sum_{i,j,k}
(X_1)_{ij}(X_2)_{jk}(X_3)_{ki} - \sum_{i,j,k}
(X_1)_{ij}(X_3)_{jk}(X_2)_{ki} .\]
Differentiating with respect to a matrix entry $(X_1)_{ij}$, the corresponding component of the one-form  ${\rm d}\widetilde W_n$ is precisely 
the $(j,i)$ entry of the commutator $[X_2, X_3]$. By cyclicity, this also holds for 
all the other differentiations. The vanishing of the one-form therefore precisely recovers the commutator relations, and
so the subset (subscheme) $\Crit(\widetilde W_n)$ of $N_n$ is exactly $M_n$.
Finally, $\tilde W_n$ is $\GL(V)$-invariant, so it descends to a function \[W_n\colon\NCHilb^n_3\to\C\] 
with degeneracy locus given by the quotient of the degeneracy locus on $N_n$, which is just $\Hilb^n(\C^3)$. 
We can summarize the discussion in the following result. 

\begin{proposition}\label{prop:hilb} The smooth noncommutative Hilbert scheme $\NCHilb^n_3$  carries the function
$W_n\colon \NCHilb^n_3\to\C$. The commutative Hilbert scheme $\Hilb^n(\C^3)$ is the scheme-theoretic critical locus
$\Crit(W_n)$ of the function $W_n$ on $\NCHilb^n_3$, the subscheme given by the equations $\{{\rm d}W_n=0\}$.
\end{proposition}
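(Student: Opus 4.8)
The plan is to prove the two assertions of the proposition separately: first, that the $\GL(V)$-invariant function $\tilde W_n$ descends to a well-defined function $W_n$ on the quotient $\NCHilb^n_3 = N_n/\GL(V)$, and second, that forming the critical locus commutes with passage to this quotient, so that the scheme-theoretic identification $\Crit(\tilde W_n) = M_n$ already obtained on $N_n$ transfers verbatim to the identification $\Crit(W_n) = \Hilb^n(\C^3)$ downstairs.

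For the first assertion I would verify directly that $\tilde W_n$ is invariant under the conjugation action of $\GL(V)$. Under $X_i \mapsto g X_i g^{-1}$ the commutator transforms as $[X_2,X_3]\mapsto g[X_2,X_3]g^{-1}$, whence $\Tr\bigl(g X_1 g^{-1}\cdot g[X_2,X_3]g^{-1}\bigr) = \Tr\bigl(X_1[X_2,X_3]\bigr)$ by cyclicity of the trace; the framing vector $v$ does not enter $\tilde W_n$ at all. By Proposition~\ref{prop:NCHilb} the $\GL(V)$-action on $N_n$ is free with trivial stabilizers, so the quotient map $\pi\colon N_n \to \NCHilb^n_3$ is a (Zariski-locally trivial) principal $\GL(V)$-bundle, in particular a smooth surjection, and any invariant function on the total space descends uniquely to the base. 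This yields $W_n$ with $\tilde W_n = \pi^* W_n$.

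The second assertion carries the real content. The differentiation computation recorded above shows that, as a subscheme of $N_n$, the zero scheme of $\mathrm d\tilde W_n$ is cut out exactly by the ideal generated by all matrix entries of the commutators $[X_i,X_j]$, so that $\Crit(\tilde W_n) = M_n$ scheme-theoretically. I would then show $\Crit(\tilde W_n) = \pi^{-1}\bigl(\Crit(W_n)\bigr)$ as schemes: since $\tilde W_n = \pi^* W_n$ we have $\mathrm d\tilde W_n = \pi^*(\mathrm dW_n)$, and because $\pi$ is a submersion the critical ideal downstairs pulls back to the critical ideal upstairs. Dividing by the free action gives $\Crit(W_n) = \Crit(\tilde W_n)/\GL(V) = M_n/\GL(V)$, which is precisely $\Hilb^n(\C^3)$ by Proposition~\ref{prop:NCHilb}.

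The main obstacle is the scheme-theoretic — as opposed to merely set-theoretic — compatibility of the critical-locus construction with the GIT quotient. Set-theoretically the equality is immediate from the preceding computation, but to match the defining \emph{ideals} one must exploit smoothness (indeed freeness) of the $\GL(V)$-action so that $\pi$ is a submersion and no nilpotents are lost on pullback. Here invariance is exactly what one needs: choosing local coordinates on $N_n$ splitting into base directions and orbit (fibre) directions, the equality $\tilde W_n = \pi^* W_n$ forces the fibre-derivatives of $\tilde W_n$ to vanish identically, so $\mathrm d\tilde W_n$ introduces no spurious vanishing conditions along the orbits and its critical ideal is generated precisely by the pullbacks of the partials of $W_n$. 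Once this is established, the final identification of $M_n/\GL(V)$ with the Hilbert scheme requires no further work, being exactly the content of Proposition~\ref{prop:NCHilb}.
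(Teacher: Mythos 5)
Your proposal is correct and follows essentially the same route as the paper: the matrix-entry differentiation identifying $\Crit(\widetilde W_n)=M_n$ upstairs, followed by descent of the $\GL(V)$-invariant function and its critical locus to the quotient $\NCHilb^n_3$, where Proposition~\ref{prop:NCHilb} identifies the result with $\Hilb^n(\C^3)$. The only difference is one of detail: the paper asserts in a single sentence that the degeneracy locus downstairs is the quotient of the one upstairs, whereas you justify this scheme-theoretically (invariance kills the fibre derivatives, so the critical ideal of $\widetilde W_n$ is exactly the pullback of that of $W_n$, and freeness plus flat descent finish the argument), which is a worthwhile filling-in of a step the paper leaves implicit.
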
 

\begin{remark} Instead of considering framed triples, we can also look at the space of triples of commuting matrices
\[M^0_n = \{ (X_1, X_2, X_3)\colon [X_i, X_j]=0\} \subset \GL(V)^3,\] 
and its quotient $M^0_n/\GL(V)$. Since the action is no longer very pleasant, it is best to think of this as the stack quotient
$[M^0_n/\GL(V)]$. An analogous argument to the above gives that this is isomorphic to the stack $\Tors^n(\C^3)$ of torsion sheaves
of length $n$ on $\C^3$, yet another way to parametrize $n$~points on $\C^3$. This again is a degeneracy locus, 
now in the stacky sense. 
\end{remark}

The significance of these results is that, as I~will discuss below, being a (scheme-theoretic) critical locus is exactly the right
local structure for a moduli space of sheaves on a Calabi--Yau threefold; this in turn will lead us to cohomological DT theory. 

\subsection{Numerical DT invariants} \label{sec:macmahon}
While the method of computation is not strictly relevant to the direction we are going to
take, for completeness I~mention the well known result for DT invariants. Let
\[ Z_{\C^3}(t) = 1 + \sum_{n\geq 1} \chi_\vir\left(\Hilb^n(\C^3)\right) t^n.
\]
\begin{theorem}\label{thm:macmahon} \cite{MNOP, BF} We have
\[Z_{\C^3}(t) = \prod_{m \geq 1}\left(1-(-t)^n\right)^{-n}.
\]
\end{theorem}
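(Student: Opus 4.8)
The plan is to combine Behrend's theorem with torus localization, reducing the entire series to a classical enumeration of plane partitions. First I would invoke the theorem of Behrend quoted above to replace the virtual invariant by a weighted Euler characteristic, $\chi_\vir(\Hilb^n(\C^3)) = \int_{\Hilb^n(\C^3)}\nu\,d\chi$. The dense torus $T=(\C^*)^3$ acts on $\C^3$ and hence on each $\Hilb^n(\C^3)$ by automorphisms; its fixed points are precisely the \emph{monomial} ideals $I_\pi\lhd\C[x_1,x_2,x_3]$ of colength $n$, which are in bijection with three-dimensional partitions (plane partitions) $\pi$ with $|\pi|=n$, a finite set. Because the Behrend function $\nu$ is canonically attached to the scheme it is $T$-invariant, so each of its level sets is a $T$-variety; since the topological Euler characteristic of any variety carrying a $\C^*$-action equals that of its fixed locus, the weighted Euler characteristic localizes to the fixed points, giving $\chi_\vir(\Hilb^n(\C^3)) = \sum_{|\pi|=n}\nu(I_\pi)$.

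The heart of the argument, and the step I expect to be the main obstacle, is the evaluation $\nu(I_\pi) = (-1)^n$ at every fixed point. Here I would exploit the critical-locus description of Proposition~\ref{prop:hilb}: near $I_\pi$ the Hilbert scheme is $\Crit(W_n)$ inside the smooth variety $\NCHilb^n_3$, so the Behrend function is computed by the Milnor fibre $F_\pi$ of $W_n$ via $\nu(I_\pi) = (-1)^{\dim\NCHilb^n_3}\bigl(1-\chi(F_\pi)\bigr)$. The cubic homogeneity of $W_n=\Tr X_1[X_2,X_3]$ and the $T$-equivariance then control $\chi(F_\pi)$; equivalently, one shows directly that $\nu(I_\pi)=(-1)^{\dim T_{I_\pi}\Hilb^n(\C^3)}$ and that the dimension of this Zariski tangent space has the same parity as $n$ — a parity established through the equivariant character of the tangent space, not merely from the duality $\Ext^1\cong(\Ext^2)^\vee$. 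This sign computation is exactly the technical core carried out in~\cite{BF}.

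With the local signs in hand the global answer follows formally. Summing over fixed points gives $\chi_\vir(\Hilb^n(\C^3)) = (-1)^n\,P(n)$, where $P(n)=\#\{\pi:|\pi|=n\}$ is the number of plane partitions of $n$. Hence $Z_{\C^3}(t)=\sum_{n\geq 0}P(n)(-t)^n = M(-t)$, where $M(q)=\prod_{m\geq 1}(1-q^m)^{-m}$ is the MacMahon generating function, and MacMahon's classical enumeration of plane partitions identifies this with the stated infinite product. A parallel route, the one taken in~\cite{MNOP}, bypasses the Behrend function and instead applies the Graber--Pandharipande virtual localization formula directly to $\deg[\Hilb^n(\C^3)]^\vir$: each isolated fixed point contributes $1/e(T^\vir_\pi)$, and the Calabi--Yau pairing $\Ext^1\cong(\Ext^2)^\vee$ makes the weights of the virtual tangent space $T^\vir_\pi$ cancel in pairs $\{w,-w\}$, again producing the per-point sign $(-1)^n$ after careful bookkeeping. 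Either way, the substantive content is concentrated in the single sign $(-1)^n$, and MacMahon's theorem does the rest.
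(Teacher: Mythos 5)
Your proposal is correct and is essentially the proof the paper sketches: localize the Behrend-weighted Euler characteristic under the $(\C^*)^3$-action to the monomial ideals indexed by plane partitions, invoke the result of \cite{BF} that each fixed point contributes $(-1)^n$ (via the parity of the Zariski tangent space), and conclude with MacMahon's enumeration. The paper defers exactly the same sign computation to \cite{BF} that you do, so the two arguments coincide in substance.
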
 

One proof proceeds by exploiting the torus action on $\C^3$, which extends to each $\Hilb^n(\C^3)$ and has isolated fixed
points parametrized by three-dimensional partitions of $n$. It is a non-trivial result of \cite{BF} that each fixed point contributes
$(-1)^n$, leading to the above formula via combinatorics. 
The result can also be deduced from Theorem~\ref{thm:BBS} below. 

\section{Quivers with potential and their representations}
\label{sec:quiv}

Motivated by the previous example, I~will now introduce a larger class of examples of DT-like moduli spaces, 
coming from algebra rather than geometry. We need to start with some non-commutative algebra. 

\subsection{Quivers with potential}
A {\em quiver} $Q=(V(Q), E(Q), t, h)$ consists of a finite set $V(Q)$ of {\em vertices}, and a finite set $E(Q)$ of {\em arrows} 
between them, with each arrow $a\in E(Q)$ having a tail $t(a)\in V(Q)$ and a head $h(a)\in V(Q)$. 
For now, no further conditions are assumed, so multiple arrows and arrows with head and tail at the same vertex are both 
allowed. We call arrows $a_1, a_2$ composable if $h(a_1) = t(a_2)$, and we write their composition as $a_2a_1$. A {\em path} 
is a composition $a_n\ldots a_1$ of composable arrows; there is then an obvious notion of composable paths also. 
A {\em loop} based at a vertex $v$ is a path such that $v=h(a_n)=t(a_1)$. 
We allow an empty loop $s_v$ at each vertex $v\in V(Q)$.

The path algebra $\C Q$ of $Q$ is defined to be the $\C$-vector space generated by all paths, with the product defined on
generators to be the composite of two paths if they are composable, otherwise zero. 
It has a semisimple subalgebra generated by the empty loops $s_v$ at the vertices, 
which are idempotents summing to an identity element $1\in \C Q$.

The path algebra itself, from our point of view, is too non-commutative; we wish to impose some relations. The specific relations
we have in mind arise as follows. Let $\C Q_{\rm cycl}$ be the vector space spanned by loops, with two loops considered equal if they 
can be rotated into each other by cyclic permutation. Let $W\in \C Q_{\rm cycl}$ be an element of this space, a finite linear 
combination of loops up to cyclic permutation; we call $W$ a {\em potential}. 

Given an arrow $a\in E(Q)$, we define formal partial differentiation 
\[ \partial_a \colon \C Q_{\rm cycl} \to \C Q\]
to be the linear map given on simple loops without repeating edges by 
\[ \partial_a(a_n\ldots a_1) = \left\{ \begin{array}{ll} a_{i-1}\ldots a_2a_1 a_n \ldots a_{i+1} & \mbox{if } a=a_i, \\ 0 & \mbox{otherwise.}\end{array}\right.
\]
We extend $\partial_a$ to $\C Q_{\rm cycl}$ using $\C$-linearity and the Leibnitz rule. 
We then define the {\em Jacobi algebra} corresponding 
to the pair $(Q,W)$ to be the quotient 
\[ J(Q,W) = \C Q / \langle \langle \partial_a W | \ a\in E(Q) \rangle\rangle.
\]

\subsection{Examples}\label{sec:exquiver}

\begin{example} \label{ex:threeloop}
Let $Q$ have one vertex and three loop edges $a_1, a_2, a_3$. Then $\C Q \cong \C\langle X_1, X_2, X_3\rangle$ is the
free non-commutative polynomial algebra on three variables. Let $W=a_1a_2a_3 - a_2 a_1 a_3$. Then it is easy to check that, 
in cyclic notation,
\[ \partial_{a_i} W = [a_{i+1}, a_{i+2}],
\]
and so $J(Q,W)\cong\C[x_1, x_2, x_3]$, the free {\em commutative} polynomial algebra on three variables. This example is clearly 
related to the discussion in Section~\ref{sec:hilbC3}; further connections will be revealed later.
\end{example}
\begin{example} \label{ex:mckay}
For a substantial generalization of this example, consider the following situation. Let $\Gamma<\SL(3,\C)$ be
a finite abelian group of order $r$ acting on $\C^3$ by linear maps of determinant $1$. Without loss of generality, we can assume that 
$\Gamma$ acts by diagonal matrices. Following McKay, attach a quiver to $\Gamma<\SL(3,\C)$ as follows. 
\begin{enumerate}
\item Let the vertex set $V(Q)$ of $Q$ be the set of irreducible representations $\{\rho_0, \ldots, \rho_{r-1}\}$ of $\Gamma$. 
\item The diagonal embedding $\Gamma<\SL(3,\C)$ corresponds to the existence of indices $d_1, d_2, d_3$ such that 
the action of $\Gamma$ on $\C^3$ is via \[\C^3 \cong V=\rho_{d_1} \oplus \rho_{d_2} \oplus \rho_{d_3}.\]
\item Set the numbers of arrows in $Q$ from $\rho_i$ from $\rho_j$ to be $\dim \Hom_{\Gamma}(\rho_i, \rho_j\otimes V)$. 
In other words,
the number of arrows from $\rho_i$ from $\rho_j$ counts the number of copies of $\rho_i$ in $\rho_j\otimes V$. 
\end{enumerate}

Note that we have
a decomposition of the set of arrows \[E(Q) = E(Q)_1 \sqcup E(Q)_2 \sqcup E(Q)_3\] according to which summand
of $V$ a particular arrow came from. The resulting oriented graph $Q$ is oriented three-regular, with each vertex being the head, 
and the tail, of exactly one arrow from each $E(Q)_{i}$.

To define a potential on the quiver $Q$, we take the potential of the previous example, and make it equivariant. In other
words, let $\{a_{j3}a_{j2}a_{j1}\}\in\C Q_{\rm cycl}$ be the set of all loops in $Q$ up to cyclic permutation such that  
$a_{ji}\in E(Q)_{i}$; let $\{b_{k3}b_{k1}b_{k2}\}\in\C Q_{\rm cycl}$ be the set of all loops in $Q$ up to cyclic 
permutation with 
$b_{ki}\in E(Q)_{i}$. Now set
\[ W = \sum_j a_{j3}a_{j2}a_{j1} - \sum_k b_{k3}b_{k1}b_{k2} \in \C Q_{\rm cycl}.
\]
\begin{proposition} The Jacobi algebra $J(Q,W)$ defined by $\Gamma<\SL(3,\C)$ is isomorphic to the {\em twisted coordinate ring}
$\C[V]\star\C[\Gamma]$, where elements of $\C[\Gamma]$ commute past polynomials in $\C[V]$ using the $\Gamma$-action on $V$. 
This is a non-commutative algebra which is finite as a module over its centre $Z(J(Q,W))\cong \C[V]^\Gamma$, the invariant ring.
\end{proposition}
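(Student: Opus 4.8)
The plan is to write $A=\C[V]\star\C[\Gamma]$ for the skew group algebra in question and to compare it with $J(Q,W)$ through an explicit algebra homomorphism built out of the quiver data. Since $\Gamma$ is abelian, $\C[\Gamma]$ is split semisimple, with primitive idempotents $e_0,\ldots,e_{r-1}$ indexed by the characters $\rho_0,\ldots,\rho_{r-1}$ and satisfying $\sum_i e_i=1$. Inside $A$ these idempotents will play the role of the empty loops $s_{\rho_i}$, while each coordinate $x_k$ obeys a commutation rule $g\,x_k=\rho_{d_k}(g)\,x_k\,g$ that carries the $\rho_j$-isotypic part into a fixed shifted part, so that $x_k e_j=e_{j'}x_k e_j$ for a vertex $j'$ determined by $d_k$. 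I would define $\Phi\colon \C Q\to A$ by $s_{\rho_i}\mapsto e_i$ and by sending the type-$k$ arrow leaving the vertex $j$ to $x_k e_j$. Because the arrows of $Q$ were defined so that their number equals $\dim\Hom_\Gamma(\rho_i,\rho_j\otimes V)$, this assignment matches arrows with the weight shifts of the $x_k$ exactly, and $\Phi$ respects the relations $s_{h(a)}a=a=a\,s_{t(a)}$ of the path algebra.

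The next step is to check that $\Phi$ is surjective and annihilates the Jacobi relations. Surjectivity is clear: $A$ is generated over $\C[\Gamma]=\bigoplus_i\C e_i$ by the $x_k$, and every monomial in the $x_k$ is the image of a path. For the relations, I would compute $\partial_a W$ exactly as in Example~\ref{ex:threeloop}, where one finds $\partial_{a_i}W=[a_{i+1},a_{i+2}]$; in the present equivariant setting each $\partial_a W$ is again a commutator of two consecutive arrows, and its image under $\Phi$ is $(x_kx_l-x_lx_k)e_j=0$ since the $x_k$ genuinely commute in the polynomial ring $\C[V]$. Hence $\Phi$ descends to a surjection $\bar\Phi\colon J(Q,W)\to A$.

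The heart of the argument, and the step I expect to be the main obstacle, is injectivity. Both algebras are graded, by path length on one side and polynomial degree on the other, so it suffices to match graded dimensions of each $e_{j'}J(Q,W)e_j$ with $e_{j'}A e_j$. The relations allow a bubble-sort: any two consecutive arrows of types $k$ and $l$ may be interchanged, the swapped path through the other intermediate vertex existing precisely because the condition $\Gamma<\SL(3,\C)$, i.e. $d_1+d_2+d_3=0$, makes $Q$ oriented three-regular, and the commutator $\partial W$ equating the two orderings. Thus $e_{j'}J(Q,W)e_j$ is spanned by the sorted monomial paths $p_\alpha$, one for each multidegree $\alpha=(a_1,a_2,a_3)$ whose total weight shift equals $j'-j$. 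Since $\bar\Phi(p_\alpha)=x^\alpha e_j$ and the elements $x^\alpha e_j$ of the correct weight are linearly independent and span $e_{j'}Ae_j$, the spanning set $\{p_\alpha\}$ maps to a basis; therefore $\bar\Phi$ is an isomorphism on each $e_{j'}(\,\cdot\,)e_j$, and summing over $i,j$ gives $J(Q,W)\cong A$. The delicate point is verifying that the relations $\partial_a W$ really realize all the needed arrow-swaps between the correct vertices, which is exactly where the Calabi--Yau hypothesis and the precise shape of $W$ enter.

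Finally I would identify the centre. An element $\sum_{g}f_g\,g\in A$ is central iff it commutes with all of $\Gamma$ and with each $x_k$; commuting with $\Gamma$ forces every $f_g\in\C[V]^\Gamma$ because $\Gamma$ is abelian, and commuting with the $x_k$ forces $f_g=0$ unless $g$ acts trivially on $V$, hence $g=1$ by faithfulness. This gives $Z(A)=\C[V]^\Gamma\cdot 1\cong\C[V]^\Gamma$. Finiteness over the centre then follows from the standard fact that $\C[V]$ is a finite module over its invariant ring $\C[V]^\Gamma$, together with $A$ being free of rank $|\Gamma|$ over $\C[V]$.
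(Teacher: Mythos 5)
Your proof is correct, and in fact there is nothing in the paper to compare it against: the paper states this proposition bare, with no proof, only a subsequent remark citing Ginzburg~\cite[Theorem 4.4.6]{G} for the Morita-equivalence version in the nonabelian case. Your argument --- the homomorphism $\Phi$ sending $s_{\rho_i}\mapsto e_i$ and the type-$k$ arrow out of $j$ to $x_ke_j$, surjectivity because the monomials $x^\alpha e_j$ span $\C[V]\star\C[\Gamma]$, the Jacobi relations mapping to commutators $(x_kx_l-x_lx_k)e_j=0$, and injectivity by bubble-sorting each path into a normal form $p_\alpha$ and observing that these map bijectively onto the vector-space basis $\{x^\alpha e_j\}$ of $\C[V]\otimes\C[\Gamma]$ --- is the standard proof of this folklore result, and it is complete in its essentials; the centre computation (commutation with $\Gamma$ forces invariant coefficients, commutation with the $x_k$ plus faithfulness of $\Gamma<\SL(3,\C)$ kills all $f_g$ with $g\neq 1$, and finiteness follows from Noether's theorem plus freeness of the skew group algebra over $\C[V]$) is likewise correct.

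One imprecision is worth correcting, precisely at the point you flag as delicate. You attribute the three-regularity of $Q$ and the existence of the swapped parallel path to the condition $d_1+d_2+d_3=0$. Neither needs it: three-regularity follows from $\dim V=3$ together with $\Gamma$ being abelian, and the two length-two paths of types $(k,l)$ and $(l,k)$ out of a vertex $j$ both end at $j+d_k+d_l$ simply because the character group is abelian. What the $\SL(3,\C)$ hypothesis actually buys is that the cubic cyclic words appearing in $W$ are genuine loops (a path of types $1,2,3$ returns to its starting vertex if and only if $d_1+d_2+d_3=0$), i.e.\ that the potential $W$, and hence the Jacobi algebra, is defined at all; once $W$ exists, differentiating with respect to the unique type-$i$ arrow into each vertex produces the swap relation for the remaining two types at that vertex, which is exactly what your sorting argument needs. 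Since your proof only uses the relations $\partial_aW$, which exist by hypothesis, this misattribution does not affect its validity.
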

\begin{remark} If $\Gamma$ is a nonabelian subgroup of $\SL(3,\C)$, 
the definition of the McKay quiver remains the same, and much of the discussion 
generalizes; the obtained Jacobi algebra will still be Morita equivalent, 
though not necessarily isomorphic, to the twisted coordinate ring~\cite[Theorem 4.4.6]{G}.
\end{remark}
\end{example}
\begin{example} \label{ex:conifold}
Let $Q$ have two vertices $V(Q)=\{1, 2\}$ and four arrows, with $a_1, a_2$ from $1$ to $2$, and $b_1, b_2$ from $2$
to $1$. Following~\cite{KW}, set $W= a_1b_1a_2b_2 - a_1b_2a_2b_1$. 
\begin{proposition} The Jacobi algebra $J(Q,W)$ is a non-commutative algebra
which is finite as a module over its centre 
\[Z(J(Q,W))\cong \C[x_{11},x_{12},x_{21},x_{22}]/\langle x_{11} x_{22}-x_{12} x_{21}\rangle,\] 
with $x_{ij} = a_ib_j + b_ja_i$.
\end{proposition}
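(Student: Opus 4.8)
The plan is to write out the Jacobi relations, use them to put paths into a normal form, and read off both assertions from that form. First I would compute the four relations $\partial_{a_i}W=0$, $\partial_{b_j}W=0$. The differentiation rule gives
\[ b_1a_2b_2=b_2a_2b_1,\quad b_2a_1b_1=b_1a_1b_2,\quad a_2b_2a_1=a_1b_2a_2,\quad a_1b_1a_2=a_2b_1a_1, \]
which I would repackage as the two families
\[ a_ib_ja_k=a_kb_ja_i,\qquad b_ja_ib_l=b_la_ib_j\qquad(i,j,k,l\in\{1,2\}), \]
the cases $i=k$ and $j=l$ being vacuous; in words, one may transpose two $a$-indices separated by a single $b$, and two $b$-indices separated by a single $a$. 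Writing $e_1,e_2$ for the vertex idempotents, $a_ib_j$ is a loop at vertex $2$ and $b_ja_i$ a loop at vertex $1$, so $x_{ij}=a_ib_j+b_ja_i$ commutes with $e_1,e_2$; moreover $x_{ij}a_k=a_ib_ja_k=a_kb_ja_i=a_kx_{ij}$ and $x_{ij}b_l=b_ja_ib_l=b_la_ib_j=b_lx_{ij}$ by the two families. Hence each $x_{ij}$ is central, and the $x_{ij}$ commute with one another. Left-multiplying $b_1a_2b_2=b_2a_2b_1$ by $a_1$ yields $a_1b_1a_2b_2=a_1b_2a_2b_1$, which is the vertex-$2$ part of $x_{11}x_{22}=x_{12}x_{21}$ (the vertex-$1$ part being analogous); since a central element is determined by its two diagonal blocks, $x_{11}x_{22}=x_{12}x_{21}$ holds in $J(Q,W)$. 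We therefore obtain a homomorphism $C:=\C[x_{11},x_{12},x_{21},x_{22}]/\langle x_{11}x_{22}-x_{12}x_{21}\rangle\to Z(J(Q,W))$.

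Next I would extract finiteness. Applying the two families repeatedly sorts, inside any path, the $a$-indices and (independently) the $b$-indices into nondecreasing order. Every loop at vertex $2$ is then a product $\prod_k(a_{i_k}b_{j_k})$ of the elements $a_ib_j=e_2x_{ij}e_2$, so $e_2J(Q,W)e_2$ is generated as an algebra by the images of the $x_{ij}$, and symmetrically for $e_1J(Q,W)e_1$. Every path of odd length is such an even loop composed with a single $a_k$ or $b_l$, so $J(Q,W)$ is spanned as a module over the image of $C$ by the finite set $\{e_1,e_2,a_1,a_2,b_1,b_2\}$, which already proves that $J(Q,W)$ is finite over its centre.

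The step carrying the real content is the linear independence of these sorted words, equivalently the injectivity of $C\to e_2J(Q,W)e_2$. I would establish it by Bergman's Diamond Lemma, orienting the relations as the rewriting rules $a_2b_ja_1\to a_1b_ja_2$ and $b_2a_ib_1\to b_1a_ib_2$, whose irreducible words are exactly the sorted ones. These two rules overlap only in the length-four words $a_2b_2a_1b_1$ and $b_2a_2b_1a_1$, and a short computation shows each such ambiguity reduces to a single common normal form; hence the system is confluent and the sorted words form a $\C$-basis. Counting sorted loops of length $2m$ at vertex $2$ then shows the degree-$m$ piece has dimension $(m+1)^2=\dim C_m$, so $C\xrightarrow{\sim}e_2J(Q,W)e_2$ and, by symmetry, $C\xrightarrow{\sim}e_1J(Q,W)e_1$; in particular each diagonal block is the integral domain $C$. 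I expect this confluence check to be the main obstacle, being the only point where one must genuinely rule out a collapse of the algebra; the rest is bookkeeping. (Alternatively one could invoke the description of $J(Q,W)$ as a noncommutative crepant resolution of the conifold, but the Diamond Lemma keeps the argument self-contained.)

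Finally I would show the centre is no larger than the image of $C$. A central $z$ is diagonal, $z=e_1ze_1+e_2ze_2$, and $e_2ze_2\in e_2J(Q,W)e_2=\mathrm{im}(C)$, say $e_2ze_2=P(a_ib_j)$ for a polynomial $P$. Then $z':=z-P(x_{ij})$ is central with vanishing vertex-$2$ part, so $z'=e_1z'e_1$ and $z'a_1=e_1z'e_1a_1=0$ because $e_1a_1=0$; by centrality $a_1z'=0$ as well. Left-multiplying by $b_1$ gives $(b_1a_1)z'=0$ inside the domain $e_1J(Q,W)e_1$, and since $b_1a_1\neq0$ this forces $z'=0$, i.e. $z=P(x_{ij})\in\mathrm{im}(C)$. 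Together with the injectivity from the previous step this yields $Z(J(Q,W))\cong C$. Noncommutativity is clear, since for instance $a_1b_1$ and $b_1a_1$ are distinct basis elements supported at the two different vertices.
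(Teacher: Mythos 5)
Your proof is correct, but there is nothing in the paper to compare it against: the proposition is stated in Example~\ref{ex:conifold} with no argument at all, as a known fact going back to the physics literature~\cite{KW} and to the identification of $J(Q,W)$ as a non-commutative crepant resolution of the conifold~\cite{vdB}. So your write-up supplies a proof the paper omits, and it holds up under scrutiny. The four Jacobi relations are computed correctly with the paper's cyclic-differentiation convention, and their repackaging as ``transpose $a$-indices across a single $b$, and $b$-indices across a single $a$'' is exactly right; centrality of the $x_{ij}$ and the identity $x_{11}x_{22}=x_{12}x_{21}$ (checked blockwise at each vertex) follow cleanly. The finiteness claim, with module generators $\{e_1,e_2,a_1,a_2,b_1,b_2\}$ over the image of $C$, is correct. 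The genuinely load-bearing step is, as you say, injectivity of $C\to e_2J(Q,W)e_2$, and your Diamond Lemma argument works: the only overlap ambiguities of the rules $a_2b_ja_1\to a_1b_ja_2$, $b_2a_ib_1\to b_1a_ib_2$ are indeed the two words $a_2b_2a_1b_1$ and $b_2a_2b_1a_1$ (forced index choices), both of which resolve to a common normal form, and the count of sorted loops of length $2m$ at a vertex, $(m+1)^2$, matches $\dim C_m=\binom{m+3}{3}-\binom{m+1}{3}=(m+1)^2$. One small point worth making explicit: to invoke the Diamond Lemma you should name a compatible well-founded order (degree-lexicographic with $a_1<a_2$, $b_1<b_2$ works, since each rule strictly decreases it while preserving length), and note that Bergman's lemma applies verbatim to path algebras viewed as tensor algebras over the vertex ring. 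The final step pinning down the centre --- subtracting off the vertex-$2$ block and killing the remainder by multiplying into the domain $e_1J(Q,W)e_1\cong C$ by $b_1a_1\neq 0$ --- is a nice touch that is often glossed over in the literature, where one typically deduces $Z(J(Q,W))\cong C$ from the general NCCR formalism instead.
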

\end{example}
The following common features of the examples above stand out. 
\begin{enumerate}
\item The non-commutative algebra $J(Q,W)$ is module-finite over its centre $Z(J(Q,W))$.
\item The centre $Z(J(Q,W))$ is isomorphic to the coordinate ring of an {\em affine three-dimensional Gorenstein singularity},
in other words a local singular Calabi--Yau threefold. In the first case, this threefold is $\C^3$; in the second case, it is 
the quotient $\C^3/\Gamma$; in the last case, the threefold ordinary double point 
singularity $\{x_{11} x_{22}-x_{12} x_{21}\}\subset\C^4$.
\item The algebra $J(Q,W)$ is a $3$-Calabi--Yau algebra~\cite{G}, in particular satisfying the conditions (a)-(b) of 
Section~\ref{sec:3CYalg} (2).
\end{enumerate}
If properties (1)-(3) hold, then $J(Q,W)$ is called a {\em non-commutative crepant resolution}~\cite{vdB} of the singular
threefold $\bar Y = \mathrm{Spec}\, Z(J(Q,W))$. 
Properties (1)-(3) hold more generally in case $(Q,W)$ is constructed from a toric Gorenstein threefold 
singularity~$\bar Y$, as proved in~\cite[Thm. 8.6]{Br}. Given a general $(Q,W)$, some of the properties can fail. 
For further details of aspects of the general theory, see for example~\cite{Bock, MR, D1}. 

\begin{remark} While quiver algebras have been studied extensively in mathematics, the precise
construction explained in this section originated in supersymmetric physics, in studies aiming to understand
string theory on D-branes in Calabi--Yau threefolds in the language of
gauge theory. While the literature is too vast for us to survey here, important 
papers include~\cite{DM,DGJT, BD, FHetal, HV}. The latter papers develop, in the toric case, a dual combinatorial construction called 
the dimer model~\cite{Br}. The combinatorics of dimer models plays a role in computing numerical DT invariants 
on the corresponding toric models~\cite{Sz1, MR}. 
\end{remark}

\subsection{Representations of quivers with potential}\label{subsec:reps}

Let us continue to work with a general $(Q,W)$, whether or not properties (1)-(3) of Section~\ref{sec:exquiver} hold. 
Given a {\em dimension vector } $\dd=(d_i)\in \N^{V(Q)}$, a {\em representation 
of the quiver $Q$} is a collection of vector spaces $\{U_i, i\in V(Q)\}$ attached to the vertices
with $\dim U_i = d_i$, and linear maps $\{\phi_a\colon U_{t(a)}\to U_{h(a)}, a\in E(Q)\}$ attached to the arrows. 
There is an obvious notion of morphisms, and thus isomorphisms, of representations of~$Q$. 
Fixing $\dd\in \N^{V(Q)}$ and the set $\{U_i\}$, the space of all possible representations is 
\[ N_\dd = \prod_{a\in E(Q)} \Hom(U_{t(a)},  U_{h(a)}),\]
which is acted on by the group 
\[
G_\dd = \prod_{i\in V(Q)} \GL(U_i).
\]
The quotient stack $\cR_\dd(W)= [N_\dd/G_\dd]$ parametrizes isomorphism classes of representations of $Q$ of 
dimension vector $\dd$. 
We can think of such representations alternatively as (finite-dimensional) representations of the algebra $\C Q$, and it 
is easy to see that all finite-dimensional representations arise in this way.

It is not difficult to put the potential $W$ into the picture. Because $W$ is a linear combination of loops up to cyclic rotation,
its trace $\Tr W$ can be unambiguously evaluated on any representation.
The partial derivatives $\partial_a W$ consist of a linear combination of paths, and hence they can also be evaluated 
on any representation of $Q$. Just as before, requiring these combinations to vanish gives the degeneracy locus of
the function $\Tr W$. On the other hand, such representations are precisely the 
representations of the Jacobi algebra $J(Q,W)$. So set
\[ M_\dd = \{ (\phi_e) \in N_\dd\colon \partial_a(W)(\phi_e)=0 \mbox{ for all } a\in E(Q) \}\subset N_\dd.
\]
The quotient stack $\cR_\dd(W, Q)= [M_\dd/G_\dd]$ then parametrizes isomorphism classes of representations of $J(Q,W)$ of 
dimension vector $\dd$. 

Finally, if one prefers spaces to stacks, one way to go is to fix a framing vertex $j\in V(Q)$ and consider representations
with a cyclic vector $v\in U_j$, meaning that the images of $v$ under all possible 
combinations of $\phi_a$ span $\oplus_i U_i$. Thus set 
\[ N^j_{\dd} = \{ ((\phi_a), v) \colon v \mbox { is a cyclic vector for } (\phi_a) \} \subset \prod_{a\in E(Q)} \Hom(U_{t(a)},  U_{h(a)})\times U_j\]
to be the space of linear maps with choice of framing vector, and let $M^j_{\dd}$ be the subspace of 
representations satisfying the relations $\partial_a(W)(\phi_e)=0$ for all $a\in E(Q)$. 

\begin{proposition} \label{prop:quiverrepcrit}
\begin{enumerate}
\item The stack $\cR_\dd(Q)= [N_\dd/G_\dd]$ is a non-singular stack, equipped with a regular function 
$W_\dd=\Tr W \colon \cR_\dd(W)\to \C$ given by evaluating the trace of $W$ in a given representation. Its stack-theoretic 
critical locus is exactly $\cR_\dd(W, Q)$. 
\item Given a framing vertex $j\in V(Q)$, the quotient $\cR^j_\dd(Q)= N^j_\dd/G_\dd$
is a non-singular quasiprojective variety, equipped with a regular function $W^j_\dd=\Tr W\colon \cR^j_\dd(Q)\to \C$.
The scheme-theoretic critical locus of $W_\dd^j$ is the subscheme $\cR^j_\dd(W,Q)=M_\dd^j/G_\dd$ of 
$\cR^j_\dd(Q)$.
\end{enumerate}
\end{proposition}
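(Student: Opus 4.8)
The plan is to treat both parts as direct generalizations of the computation carried out for $\C^3$ in Section~\ref{sec:hilbC3}, the only genuinely new ingredient being a bookkeeping identity matching the differential of $\Tr W$ with the formal cyclic derivatives $\partial_a W$. First I would dispose of the smoothness and descent claims, which are formal. The space $N_\dd = \prod_{a} \Hom(U_{t(a)}, U_{h(a)})$ is a finite-dimensional vector space, hence a smooth affine variety, and $G_\dd = \prod_i \GL(U_i)$ is a smooth affine algebraic group; therefore $[N_\dd/G_\dd]$ is a smooth Artin stack. The function $W_\dd = \Tr W$ is well-defined on $N_\dd$ because $W$ is a sum of loops taken up to cyclic rotation and the trace is cyclically invariant; it is $G_\dd$-invariant for the same reason, so it descends to a regular function on the quotient stack.

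The core of (1) is the identification of the critical locus, and here I would work on $N_\dd$ in the linear coordinates given by the matrix entries $(\phi_a)_{pq}$ of the arrows. Writing $W$ as a sum of cyclic words and expanding each trace $\Tr(\phi_{a_n}\cdots\phi_{a_1})$ in these coordinates, I would differentiate with respect to $(\phi_a)_{pq}$. Using cyclic invariance of the trace exactly as in the $\C^3$ case, the partial derivative is the $(q,p)$ entry of the matrix obtained by evaluating the path $\partial_a(W)$ on the representation; summing over the occurrences of $a$ reproduces precisely the Leibniz-extended definition of $\partial_a$. Thus the component of $\mathrm dW_\dd$ along $(\phi_a)_{pq}$ equals $\left(\partial_a(W)(\phi)\right)_{qp}$, so the coordinate functions of $\mathrm dW_\dd$ and the entries of the $\partial_a(W)$ generate the same ideal. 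This gives the scheme-theoretic equality $\Crit(W_\dd)=M_\dd$ on $N_\dd$, which, since $W_\dd$ is $G_\dd$-invariant, descends to $\Crit(W_\dd)=[M_\dd/G_\dd]=\cR_\dd(W,Q)$ on the stack.

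For (2), the framing vector $v$ contributes an extra affine factor $U_j$, and the cyclicity requirement is an open condition, so $N^j_\dd$ is an open subvariety of the smooth affine $N_\dd\times U_j$, hence smooth. Just as in the proof of Proposition~\ref{prop:NCHilb}, cyclicity is a GIT stability condition for which stable points carry no nontrivial automorphisms, so $G_\dd$ acts freely and $\cR^j_\dd(Q)=N^j_\dd/G_\dd$ is a smooth quasiprojective geometric quotient. Since $W$ involves only the arrow data and not $v$, the function $W^j_\dd=\Tr W$ is the restriction of $W_\dd$, so its differential has no $U_j$-component; the $\phi_a$-derivatives are computed exactly as above, giving $\Crit(W^j_\dd)=M^j_\dd$ inside $N^j_\dd$. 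Because the action is free, forming the critical locus commutes with passage to the quotient, yielding $\Crit(W^j_\dd)=M^j_\dd/G_\dd=\cR^j_\dd(W,Q)$.

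The step I expect to require the most care is the differentiation identity: one must check that $\partial_{(\phi_a)_{pq}}\Tr W$ reproduces the formal derivative $\partial_a W$ not merely set-theoretically but so that the two families of functions cut out the same ideal, so that $\Crit(W_\dd)$ agrees with $M_\dd$ as a \emph{scheme} and not just as a set. The bookkeeping is immediate for simple loops with no repeated arrow (the $\C^3$ case), but for loops in which an arrow occurs several times, where the Leibniz rule produces a sum of terms, some attention is needed to line up the matrix-entry computation with the definition of $\partial_a$ on $\C Q_{\mathrm{cycl}}$.
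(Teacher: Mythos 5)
Your proposal is correct and follows essentially the same route as the paper, which proves this by generalizing the explicit matrix-entry differentiation from the $\C^3$/Hilbert scheme case (cyclic invariance of the trace, identification of $\partial_{(\phi_a)_{pq}}\Tr W$ with an entry of $\partial_a W$ evaluated on the representation, then descent of the $G_\dd$-invariant function and its degeneracy locus), with the smoothness and quasiprojectivity in the framed case obtained exactly as in Proposition~\ref{prop:NCHilb} via King/GIT stability and freeness of the action on cyclic representations. Your extra care about the Leibniz terms for repeated arrows and about scheme-theoretic (not just set-theoretic) equality of $\Crit(W_\dd)$ and $M_\dd$ is a faithful filling-in of details the paper leaves implicit, not a departure from its argument.
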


\begin{remark} Choosing a framing vertex and considering cyclic representations is a special case of a more general 
construction, that of choosing a stability parameter and considering stable representations. For quivers,
the right notion of stability is King stability~\cite{K}. Fixing a dimension vector $\dd$, there is 
a space of (King) stability parameters 
\[ H_\dd = \{ \theta\in\Z^{V_Q} \colon \theta \cdot \dd = 0 \} \subset \Z^{V(Q)}. \] 
Choosing a generic $\theta\in H_\dd$ we get the moduli spaces $\cR^\theta_\dd(Q)\supset\cR^\theta_\dd(Q,W)$ 
of $\theta$-stable representations of $\C Q$, respectively $J(Q,W)$; both are quasi-projective GIT quotients,  
the former non-singular. 
Since we will not need the details of this construction, we will not recall them and refer instead to~\cite{K}.  
The framing case is when the $\theta$-weight of the vertex $j$ is chosen to be large positive, and all the other weights
sufficiently negative. 
\end{remark} 

We thus obtain a large supply of moduli spaces with the property that they arise as critical loci
of regular functions on smooth schemes or stacks. In the examples of the previous section, these spaces 
are sometimes familiar objects. 
\begin{enumerate}
\item For the three-loop quiver of Example~\ref{ex:threeloop} and dimension vector $\dd=(n)$, as the framed moduli
space we recover the Hilbert scheme of~$n$ points on~$\C^3$ from Section~\ref{sec:hilbC3}. The stack of unframed 
representations is the stack of torsion sheaves. 
\item For the McKay quiver of Example~\ref{ex:mckay}, from a finite abelian subgroup 
$\Gamma\in\SL(3,\C)$, choose the framing vertex to be at the trivial representation~$\rho_0$. Then for dimension vector 
$\dd=(1,\ldots, 1)$, the space $\cR^{\rho_0}_{\dd}(Q,W)$ is Nakamura's $\Gamma$-Hilbert scheme~\cite{Naka}. More general 
dimension vectors lead to equivariant (or stacky) Hilbert schemes of the quotient stack $[\C^3/\Gamma]$. Keeping
$\dd=(1,\ldots, 1)$ but changing the stability condition leads to various moduli spaces of $\Gamma$-constellations~\cite{CI}. 
\item Finally in the conifold case of Example~\ref{ex:conifold}, for 
certain specific stability parameters $\theta$ the spaces $\cR^\theta_\dd(W, Q)$ can be 
identified~\cite{NN} with geometric DT and PT (pairs) moduli spaces~\cite{MNOP, PT} of the resolved conifold geometry
$Y=\O_{\P^1}(-1,-1)$. 
\end{enumerate}

\section{Local and global structure of the moduli space}

Let us turn to the question in what generality the structure found in Section~\ref{sec:hilbC3} and 
generalized algebraically in Section~\ref{sec:quiv} can be expected to hold in geometric cases. Fix a projective
Calabi--Yau threefold $Y$, and let $\M$ be a moduli space of simple coherent sheaves on $Y$.

\subsection{Local structure} It is now known that, locally, the space $\M$ is a critical locus. 
Here is the precise result. 

\begin{theorem}\label{thm:localstr} Let $\M$ be a moduli space of simple coherent sheaves on the projective 
Calabi--Yau threefold~$Y$. Let $p\in\M$ be a closed point. Then there exists a quadruple $(R,U,f,i)$ with the following
properties: 
\begin{itemize}
\item $p\in R$ is a Zariski open neighbourhood of $p$ in $\M$; 
\item $f\colon U\to \C$ is a regular function on a smooth scheme $U$; 
\item $i\colon R \to U$ is an embedding, such that $i(R) = \Crit(f)$ as subschemes of $U$.
\end{itemize}
\end{theorem}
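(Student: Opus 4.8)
The plan is to reconstruct a neighbourhood of $p=[E]$ in $\M$ from the deformation theory of the simple sheaf $E$, and to package the obstructions into a single potential function using the Calabi--Yau symmetry. Since $E$ is simple, $\Hom(E,E)=\C$, so there are no infinitesimal automorphisms beyond scalars and the (formal, or local-analytic) deformation theory of $E$ is governed by the $A_\infty$-algebra $R\Hom(E,E)$, with cohomology $A=\bigoplus_i \Ext^i(E,E)$, tangent space $\Ext^1(E,E)$ and obstruction space $\Ext^2(E,E)$. First I would apply homotopy transfer (homological perturbation) to endow the finite-dimensional graded vector space $A$ with a \emph{minimal} $A_\infty$-structure $\{m_n\}_{n\geq 2}$ quasi-isomorphic to $R\Hom(E,E)$; the Maurer--Cartan locus $\{\sum_{n\geq 2} m_n(x,\dots,x)=0\}$ inside $\Ext^1(E,E)$ is then a local model for $\M$ at $p$.

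Next I would exploit the Calabi--Yau condition. Serre duality together with $\omega_Y\cong\O_Y$ yields non-degenerate pairings
\[ \Ext^i(E,E)\times \Ext^{3-i}(E,E)\to \C, \]
making $A$ a \emph{cyclic} $A_\infty$-algebra: the transferred products $m_n$ can be chosen cyclically symmetric with respect to this pairing, by the cyclic minimal-model theorem of Kontsevich--Soibelman and Kajiura. This is the conceptual heart of the argument, and also its first technical subtlety, since naive homotopy transfer does not preserve cyclicity; one must choose a contraction compatible with the pairing, for instance a Hodge-theoretic (harmonic) splitting.

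Cyclicity then lets me integrate the obstruction map. On $V=\Ext^1(E,E)$ I would define the formal potential
\[ W(x)=\sum_{n\geq 2} \frac{1}{n+1}\,\langle m_n(x,\dots,x),\, x\rangle. \]
The cyclic symmetry makes the $n+1$ terms of its differential coincide, so that $dW_x=\sum_{n\geq 2} m_n(x,\dots,x)$ under the identification $\Ext^2(E,E)\cong V^\vee$. Hence the Maurer--Cartan locus is exactly $\Crit(W)$, and taking $U$ a neighbourhood of the origin in $V$ with $f=W$ realizes $\M$ near $p$ as a critical locus --- at least formally.

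The hard part will be upgrading this formal (or local-analytic) model to the \emph{algebraic} statement, with $R$ a Zariski-open subscheme, $U$ a smooth scheme, $f$ a regular function, and $i(R)=\Crit(f)$ as \emph{schemes}: the potential $W$ is a priori only a power series and the minimal-model construction is not intrinsically algebraic. To upgrade it I would invoke Artin approximation to replace the formal critical chart by an \'etale- or Zariski-local algebraic one, after checking that the scheme structures agree to all orders. The cleanest route sidesteps these issues: Pantev--To\"en--Vaqui\'e--Vezzosi equip the derived moduli space of simple sheaves on $Y$ with a $(-1)$-shifted symplectic structure, and the Darboux theorem of Brav--Bussi--Joyce then produces the chart $(R,U,f,i)$ directly. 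This is presumably the modern proof the author has in mind, packaging the cyclic-$A_\infty$ picture above into intrinsic derived-geometric form.
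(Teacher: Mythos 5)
Your final route is exactly the paper's proof: the paper establishes the theorem by combining the existence of a $(-1)$-shifted symplectic structure on the derived moduli scheme~\cite{PTVV} with the algebraic Darboux theorem for such structures~\cite{BBJ, BG}, which is precisely the pair of results you invoke at the end. Your cyclic $A_\infty$/Maurer--Cartan preamble plays the same role as the paper's own preliminary remarks on the Chern--Simons functional~\cite{W, DT} and Behrend's almost closed one-forms~\cite{Be} --- a formal or analytic local model whose algebraization is exactly the hard content delegated to the two cited results --- so the two arguments agree in substance.
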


This is a hard theorem. In a gauge-theoretic, infinite-dimensional setup, 
it has been known and exploited for some time that the moduli space of holomorphic bundles on a Calabi--Yau threefold
can be described by the critical points of the Chern--Simons
functional on the space of connections~\cite{W, DT}.
While it is possible to cut this result down to a finite-dimensional model~\cite{JS}, the result one obtains
is necessarily weaker (non-algebraic). A finite-dimensional algebraic result which was weaker in a different way
has been known for some time, due to Behrend~\cite{Be}: the moduli
space is the zero-locus of a so-called {\em almost closed one-form}. The full statement above
follows from combining two difficult results. The first one states that the moduli scheme $\M$ has the structure of
a {\em smooth derived scheme with a $(-1)$-shifted symplectic structure}~\cite{PTVV}; 
I~refer to the original source for the definition of all these terms. The second result
is that a smooth $(-1)$-shifted symplectic derived scheme is locally of the stated form~\cite{BBJ, BG}. 

Theorem~\ref{thm:localstr} continues to hold if instead of simple coherent sheaves, we consider simple 
complexes, objects of the category $\D^b(\Coh Y)$. There is also an extension to stacks, for which we refer to~\cite{BBBJ}. 
Two variants of this result that would be useful, which however do not appear to be in the literature, is the existence
of a $(-1)$-shifted symplectic structure on $\M$ in case 
\begin{itemize} 
\item $Y$ is only quasiprojective, but we restrict the objects
parametrized so that $\M$ is still of finite type;
\item $\M$ is a moduli of objects in a 3-Calabi--Yau category $\cC$, perhaps with some extra properties. 
\end{itemize}
See~\cite[Thm.7.3.2]{Bus} for a partial result in the first case. 

It appears to be a difficult problem to find explicit local models
$(R,U,f,i)$ for specific geometric situations.

\begin{example} A set of examples well studied in the literature~\cite{Ka2, AK} where explicit potentials can be written down
is the case of (ideal sheaves of) rational curves $C\cong \P^1$ in
(local) Calabi--Yau threefolds $Y$. Here is one example 
from~\cite[Sect. 5.3]{AK}. Start with the Calabi--Yau threefold
\[ Y = \C^3_{x,y_1, y_2} \cup \C^3_{w,z_1,z_2}\]
glued from affine charts using the gluing map
\[\left\{ \begin{array}{rcl}w & = &x^{-1} \\
z_1 & = & x^3 y_1 + y_2^2 \\
z_2 & = & x^{-1}y_2
\end{array}\right\}.\]
This threefold contains the curve $C=\A^1_x\cup \A^1_w\cong\P^1$, with usual gluing map $w=x^{-1}$, 
given in the affine charts
by $y_i=0$, respectively $z_i=0$. The normal bundle is $\cN_C=\O_{\P^1}(-3, 1)$, as seen from the first terms
on the right hand side. A general section of $\cN_C$, giving a deformation of the curve, is given
in the first affine chart by the equations $y_1 = 0$, $y_2 = a + b x$, where $a, b$ are coordinates
on the two-dimensional first order tangent space. The deformation
space $R$ of the curve $C$ is given by $R=\Crit(f)$ for $f(a,b)=a b^2$ on $U=\C^2_{a,b}$. It is in fact easy  
to write down the whole one-dimensional family of curves corresponding to the reduced part $\A^1_a$ of $R$; 
just consider the map
\begin{eqnarray}\label{eq:laufercurve}\phi\colon \A^1_a \times \P^1& \to & Y = \C^3_{x,y_1, y_2} \cup \C^3_{w,z_1,z_2} \\
\nonumber (a, x) & \mapsto & (x,0,a)\\
\nonumber (a,w) & \mapsto & (w, a^2, aw).
\end{eqnarray}
For $a=0$, we have our original $(-3,1)$-curve $C_0=C$, whereas for $a\neq 0$, we have a $(-2,0)$-curve 
$C_a$ moving locally in a smooth one-parameter family.

If one changes the gluing data of $Y$ to \[\left\{ \begin{array}{rcl}w & = &x^{-1} \\
z_1 & = & x^3 y_1 + y_2^2+x^2 y_2^{2n+1} \\
z_2 & = & x^{-1}y_2
\end{array}\right\},\]
then one gets the famous Laufer example~\cite{L} of a 
non-deforming, contractible $(1,-3)$-curve $C\subset Y$, whose deformation space $R$ is the non-reduced, 
isolated critical locus $\Crit(f)$ of the function $f$ on $U=\C^2_{a,b}$ given by $f(a,b)=ab^2 + \frac{a^{2n+2}}{2n+2}$.
\label{ex:Laufercurve}
\end{example} 

\subsection{Global structure}

We call a quadruple $(R,U,f,i)$ as in Theorem~\ref{thm:localstr} 
a {\em critical chart} at $p\in\M$. We thus know that the moduli space $\M$ is covered by critical charts. The question
is what is the right ``minimal'' global structure which allows us to choose a compatible set of critical charts. 

This problem was solved by Joyce in~\cite{J}, where he introduced the notion of a {\em d-critical locus}, 
a pair $(X,s)$, where $X$ is a scheme, and $s$ is the section of a certain sheaf of vector spaces on $X$.
A d-critical locus is, locally in the Zariski topology, covered by critical charts $(R_j,U_j,f_j,i_j)$. The section $s$ 
essentially remembers $f_j$ up to second order on its critical locus $i_j(R_j)$, and serves to glue the various
charts consistently. I~refer to~\cite{J} for further details.
A d-critical locus has a well-defined {\em d-canonical line bundle} $K_{X,s}$, living on the reduced scheme $X^{\rm red}$, 
with the property that for every critical chart $(R_j,U_j,f_j,i_j)$, there is a natural isomorphism
\[K_{X,s}|_{R_{j,\red}}\cong i_j^*(K_{U_j}^{\otimes 2})|_{R_{j,\red}}.
\]
These isomorphisms are local, and are consequences of the fact that
$X$ is cut out of in each local $U_j$ by a section of $\Omega_{U_j}$. An {\em orientation} of the d-critical locus $(X,s)$ is a global choice of square root 
line bundle $K^{1/2}_{X,s}$ of the d-canonical line bundle $K_{X,s}$. We then have
\begin{theorem} Let $\M$ be a moduli space of simple coherent sheaves on a projective 
Calabi--Yau threefold $Y$. Then $\M$ obtains a d-critical locus structure from the moduli interpretation, 
which can be oriented.
\label{thm:globalcrit}\end{theorem}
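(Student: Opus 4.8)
The plan is to establish the two assertions separately: first that $\M$ carries a d-critical structure, and then that this structure is orientable. For the d-critical structure, the efficient route is not to glue the local charts of Theorem~\ref{thm:localstr} by hand, but to exploit the stronger input behind that theorem. Recall that $\M$ is the classical truncation of a smooth derived scheme carrying a $(-1)$-shifted symplectic structure~\cite{PTVV}. I~would invoke the theorem of~\cite{BBJ, BG} that the classical truncation of any smooth $(-1)$-shifted symplectic derived scheme is \emph{canonically} a d-critical locus $(X,s)$, whose critical charts are precisely the charts $(R,U,f,i)$ produced by the local Darboux-type models. Applied to our $\M$, this produces the section $s$ of Joyce's sheaf~\cite{J} directly and globally, with no compatibility to verify by hand; the charts of Theorem~\ref{thm:localstr} are then recovered as the associated critical charts. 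This settles the first assertion and simultaneously pins down the d-canonical bundle $K_{\M,s}$ on $\M^\red$.

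The substance of the theorem is the orientation. The first step is to describe $K_{\M,s}$ in moduli-theoretic terms. Writing $\E$ for a (possibly twisted) universal sheaf on $Y\times\M$ and $\pi\colon Y\times\M\to\M$ for the projection, I~would identify $K_{\M,s}$, up to restriction to $\M^\red$, with the determinant line bundle $\det\bigl(\mathbf{R}\mathcal{H}om_\pi(\E,\E)\bigr)$ of the relative self-$\Ext$ complex; on a critical chart this matches $i^*(K_U^{\otimes 2})$ because $T_U$ is modelled on $\Ext^1(E,E)$, so $K_U\cong\det\Ext^1(E,E)^\vee$, and for simple $E$ one computes $\det\bigl(\mathbf{R}\mathcal{H}om_\pi(\E,\E)\bigr)\cong(\det\Ext^1(E,E))^{-2}$. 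The natural candidate for a square root is then a line bundle built from $\det\Ext^1(E,E)$, globalized via $\E$. The Calabi--Yau hypothesis enters through Serre duality with $\omega_Y\cong\O_Y$, which gives a self-duality $\mathbf{R}\mathcal{H}om_\pi(\E,\E)\cong\mathbf{R}\mathcal{H}om_\pi(\E,\E)^\vee[-3]$ and thereby exhibits $K_{\M,s}$ as a square \emph{pointwise and in each local chart}.

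The hard part is precisely the passage from this local square structure to a \emph{global} square-root line bundle $K^{1/2}_{\M,s}$. The self-duality coming from Serre duality furnishes a nondegenerate symmetric pairing, not a canonical splitting, so a global square root need not exist a priori: the set of square roots is a torsor under $H^1(\M^\red,\Z/2)$, and the obstruction to existence is a class in $H^2(\M^\red,\Z/2)$. The crux is to show that this obstruction vanishes for moduli of sheaves on a \cyt. I~would attack this by reducing, via the excision and deformation-invariance properties of the determinant line bundle, to a model situation — for instance comparing with the corresponding bundle on an ambient smooth parameter space, or on the full stack of coherent sheaves, where the relevant $\Z/2$-class can be computed and seen to be trivial. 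This vanishing is the genuinely nontrivial geometric input; once it is in hand, any choice of square root furnishes the orientation and completes the proof.
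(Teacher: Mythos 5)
Your treatment of the first assertion is exactly the paper's: both routes invoke the $(-1)$-shifted symplectic structure on the derived moduli scheme from~\cite{PTVV} together with the Darboux-type theorems of~\cite{BBJ, BG} to obtain the d-critical structure globally, with no chart-by-chart gluing. Moreover, your identification of the d-canonical bundle with $\det\bigl(\mathbf{R}\mathcal{H}om_\pi(\E,\E)\bigr)|_{\M^{\red}}$, and your observation that the obstruction to a global square root is a class in $H^2(\M^{\red},\Z/2)$, is a correct formulation of what the orientability assertion amounts to.

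The gap is that you never prove this obstruction vanishes, and that vanishing \emph{is} the substance of the theorem, as you yourself concede. Your proposed attack --- reduction ``via the excision and deformation-invariance properties of the determinant line bundle'' to a model situation, for instance to an ambient smooth parameter space or to the full stack of coherent sheaves where the class ``can be computed and seen to be trivial'' --- is not carried out, and the one concrete reduction you name goes in the wrong direction: orientability of the d-critical structure on the full stack of coherent sheaves is the Kontsevich--Soibelman orientation-data problem~\cite{KS1}, which is strictly harder than the statement for a moduli space of simple sheaves and was open at the time this paper was written; no argument is offered for why the class should be trivial there. (Nor is there a natural smooth ambient space carrying a universal sheaf to compare with; the absence of such a space is precisely why Theorem~\ref{thm:localstr} is hard.) The paper settles this step by citation: the square root of $K_{\M,s}$ is constructed in~\cite{Hua} and in~\cite{NO}, by arguments of a quite different nature --- in~\cite{NO}, essentially a Grothendieck--Riemann--Roch/characteristic-class computation using Serre duality and the Calabi--Yau condition to establish $2$-divisibility of the first Chern class of $\det\bigl(\mathbf{R}\mathcal{H}om_\pi(\E,\E)\bigr)$. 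Nothing in your sketch substitutes for this input, so your proposal correctly reformulates the theorem but leaves its essential half unproved.
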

The existence of the d-critical structure follows from~\cite{BBJ}, whereas the existence of square roots of the canonical
bundle is proved in~\cite{Hua, NO}. 

\begin{example} Joyce~\cite[Ex.2.39]{J} gives an example of a d-critical locus which 
cannot arise as a moduli space of simple sheaves or objects on a projective Calabi--Yau threefold, since it does not
admit an orientation. Let $X$ be the non-reduced scheme with $X_\red=\P^1$ containing a single embedded point $p\in\P^1$.
Let $f\colon\C^2\to\C$ be given by $f(x,y)=x^2y$. This defines a critical chart $(\Crit(f),\C^2,f,i)$ on $X$ near~$p$. Away from
$p$, we can take the trivial critical chart $(\A^1, \A^1, 0, j)$, with $j\colon \A^1\cong \P^1\setminus\{p\}$. We obtain
a d-critical locus structure $(X,s)$ on $X$ with $K_{X,s}|_{X_{\red}}\cong\O_{\P^1}(-5)$, hence non-orientable. 

Note that the critical chart $(\Crit(f),\C^2,f,i)$ itself was shown to be a moduli of curves and thus 
sheaves on a Calabi--Yau threefold $Y$ in Example~\ref{ex:Laufercurve}. In this example, it must be the case that in the compact
moduli space $\M$ of the ideal sheaf of the curve, there is also singular behaviour over the compactifying point.
It can in fact be argued 
directly from the geometry that this must be the case: the family~\eqref{eq:laufercurve} of curves cannot be completed with a smooth 
$(0,-2)$-curve over $\infty\in\M_\red\cong\P^1$.
\end{example}

An extension of Theorem~\ref{thm:globalcrit} to stacks can be found in~\cite{J,BBBJ}. 

\section{The DT sheaf and its cohomology}
\label{sec:DT:sheaf}

Suppose that $\M$ is an oriented d-critical locus, for example coming from a moduli problem on a Calabi--Yau threefold.
As explained in Section~\ref{sec:catDT}, 
we are looking for a cohomology theory ${\mathcal H}^*(\M)$ on $\M$, whose Euler characteristic is the DT invariant of $\M$ computed
from the Behrend function as in~\eqref{eq_euler_char}.

\subsection{Local construction} Suppose first that we are in the ideal situation of 
Sections~\ref{sec:hilbC3}-\ref{sec:quiv}, when the moduli scheme $\M$ is covered by a single critical 
chart $(R=\M,U,f,i)$. In this case, a candidate for a coefficient system on $\M$,
and thus the relevant cohomology theory, can be read off from Behrend's paper~\cite{Be}. Namely, he proves
that in this case the value of the Behrend function $\nu_\M(p)$ at a point $p\in \M$ 
agrees with the (signed) Euler characteristic of the reduced
Milnor fibre of the function $f\colon U\to\C$ at $p\in \Crit(f)=\M$. 
This in turn is known to agree with the Euler characteristic, 
with supports at $p$, of the {\em sheaf of vanishing cycles} of $f$, see Section~\ref{subsec:vanishing}. 
We can take this perverse sheaf \[\Phi=\phi_f\Q_U[\dim U]\] as our
coefficient system, the {\em DT sheaf}, on $\M$, and consider its 
(hyper)cohomology $H^*(\M, \Phi)$ as our {\em cohomological Donaldson--Thomas invariant}. For now, this is a graded vector 
space; it also carries the action of the monodromy endomorphism $T$. There is also a duality isomorphism
$\Phi\cong \DD_\M \Phi$ of $\Phi$ with its Verdier dual. 

\begin{remark} A pair $(U,f)$ of a smooth variety and a regular function gives rise to a plethora of cohomology
theories, both algebraic and topological; see for example~\cite[Sections 4 and 7]{KS2}. The cohomology 
of the sheaf of vanishing cycles, called {\em critical cohomology} by Kontsevich and Soibelman, is but one
possibility. Its main advantage over all other definitions is that is is defined ``on shell'', in terms of data
that can be described as living on the critical locus $\Crit(f)$ only. All other variants seem to require global
data of the pair $(U,f)$. Of course the study of pairs $(U,f)$ has received a lot of recent attention in the form 
of the study of Landau--Ginzburg models; the literature is too vast for us to survey it here. 
\end{remark}

\begin{example} For a harmless example, assume that $\M$ is nonsingular (in particular reduced). We can simply take 
the critical chart $(R=\M, U=\M, f=0, i={\rm id_\M})$. Then $\Phi = \Q_\M[\dim\M]$ is just the constant perverse sheaf, and 
the cohomological DT invariant is just the (shifted) classical cohomology of $\M$. 
\end{example}

\begin{example} Let $U=\C^2$, $f(x,y)=xy$. Then $R=\Crit(f)=p\subset\C^2$ is the reduced origin, and it can be checked
easily that $\Phi = \Q_p$. This example generalizes to split quadratic forms, which always give the trivial 
coefficient system on the (reduced) critical locus. This then becomes
the same example as the previous one, but in a different embedding. We
will get rid of the dependence on the embedding in the next section. 
\end{example} 

\begin{example} \label{ex:threeA1s}
Here is a small example with singular critical locus. Let $U=\C^3$ and $f(x,y,z)=xyz$. 
Then $R=\Crit(f)=\C_x\cup\C_y\cup\C_z$ is the
union of the three coordinate axes meeting at a point $p\in R$, the origin in $\C^3$. 
The vanishing cycle sheaf $\Phi=\phi_f\Q_{\C^3}[3]$, an object in the category $\Perv(R)$, must have simple factors 
which are intersection cohomology sheaves of strata in $R$  (see Section~\ref{subsec:const}). An easy computation shows that it has a 
composition series with simple factors $\Q_p$ (the trivial sheaf at
the singular point), the intersection cohomology sheaf $\IC_R$ of the
equidimensional $R$, and once again $\Q_p$. 
\end{example}

\begin{example} \label{ex:pfaff}
For a second non-trivial singular example, historically one of the first to be computed in this 
context, let $U=\C^{15}$, thought of as the space of $6\times 6$ skew-symmetric matrices. Let $f\colon U\to\C$ be given 
by the Pfaffian of the 
corresponding matrix. Then $R=\Crit(f)$ is reduced and irreducible, with a single singular point $p\in R$. It is 
easy to say more: the Pfaffian is a cubic function, and its partial derivatives with respect to the matrix entries are
the Pl\"ucker relations defining, after projectivisation, the subvariety $\Gr(2,6)\subset\P^{14}=\P U$. Thus $R$ is 
the affine cone over the Grassmannian, with its singular point $p\in R$ the cone point. 

The space $R$, by virtue of its construction, carries an oriented d-critical locus structure. As proved in~\cite{DS}, this 
structure arises from a genuine DT problem: up to a constant factor $\C^3$, it describes the singular locus of the 
Hilbert scheme $\Hilb^4(\C^3)$, the first time the Hilbert scheme is singular. The singular points of $\Hilb^4(\C^3)$
are exactly at the colength-$4$ ideals ${\bf m}_q^2$, the squares of maximal ideals of points $q\in\C^3$. 
This locus is a copy of $\C^3$, and the local structure is exactly given by the cone over the Grassmannian~\cite{Ka}. 

The DT sheaf $\Phi$ can also be analyzed in very concrete terms. It turns out to be non-semisimple, once again with a 
composition series with simple factors $\Q_p$ (the trivial sheaf at
the singular point), the intersection cohomology sheaf $\IC_R$, and $\Q_p$.
\end{example}

Let us finally discuss the simplest stacky situation, which will be important later. Let 
$\M=[M/G]$ be a global quotient stack, with $G$ reductive, arising as a stacky critical locus 
$\M= \Crit(f)\subset [N/G]$ where $f\colon N\to\C$ is a regular, $G$-invariant function. In this case, 
the vanishing cycle sheaf $\Phi=\phi_f \Q_N[\dim N]$ becomes an equivariant object~\cite{GKM} on $M$, and 
we can take its equivariant cohomology $H^*_G(M, \Phi)$~\cite{KS2} as our definition of 
the cohomological DT invariant of $\M$. It is naturally a module over $H^*_G({\rm pt}, \Q)$. 

\begin{remark}\label{rem_cat_cat} As explained in Section~\ref{sec:catDT}, the next level of categorification would involve
building a category ${\mathcal C}(\M)$ out of the moduli space $\M$. When $\M$ is covered by a single critical chart
$(R=\M,U,f,i)$, the relevant category is likely to be some version of
the {\em category of matrix factorizations}~\cite{O, LP}
${\rm MF}(U,f)$ of $f$ on $U$. The periodic cyclic homology of this
(dg) category is known to be~\cite{Efi} isomorphic to the cohomology
of the vanishing cycle sheaf of $f$. 
\end{remark} 

\subsection{Global issues} Suppose that $X$ is a scheme equipped with a d-critical locus structure, 
covered by a collection of critical charts
$\{(R_j,U_j,f_j,i_j): j\in J\}$. Locally, we can take the perverse sheaves $\Phi_j = \phi_{f_j}\Q_{U_j}[\dim U_j]$; 
the question is whether the d-critical structure carries enough information to glue these local $\Phi_j$ to a global 
sheaf $\Phi$ on $X$. We are certainly helped by the fact that perverse sheaves are known to glue uniquely, once compatible 
glueing isomorphisms are specified. As it turns out~\cite[Thm. 6.9]{BBJS}, this almost suffices: the d-critical structure
allows one to specify glueing isomorphisms {\em up to a sign}; an oriented d-critical structure gives
a unique glueing $\Phi$ on $X$. 

Given a moduli space $\M$ of simple sheaves on a projective Calabi--Yau threefold $Y$, by Theorem~\ref{thm:globalcrit}
it carries a d-critical structure and an orientation (a square root of its canonical bundle). 
Choosing an orientation, we can glue the local vanishing cycle sheaves to a global {\em DT sheaf} 
$\Phi$ on $\M$; it is important to remember that $\Phi$ depends on this further choice. Given all that though, we finally
have a definition of the {\em cohomological DT invariant of
$\M$}~\cite{BBJS, KL} as the (hyper)cohomology $H^*(\M, \Phi)$. 
This is a graded vector space, equipped with the action of an endomorphism $T$ glued from the local monodromy endomorphisms. 
The local duality isomorphisms also glue to an isomorphism $\Phi\cong \DD_\M \Phi$ of $\Phi$ with its Verdier dual. 

The case when $\M$ is a stack is covered again in~\cite{BBBJ}. The results contained in~\cite{BBBJ} 
allow one to define cohomological DT invariants for (finite type) moduli Artin stacks of sheaves (or complexes without negative
Ext groups) on Calabi--Yau threefolds. Note that in the presence of semistables, such a stack is not isomorphic to the moduli 
{\em scheme} of semistables, so the connection of this construction to results on moduli spaces containing semistables,
obtained by wall crossing~\cite{JS}, is not immediately clear. 

\begin{example} \label{ex:ruledsurface} Let $Y$ be a projective \cy\ threefold containing a 
divisor $E$ which is itself a $\P^1$-bundle $\pi: E\to C$ over a nonsingular projective curve $C$ of genus $g$. Assume also that 
$X$ admits a contraction to a singular projective threefold $\bar Y$ which contracts $E$ along $\pi$ and is an isomorphism outside $E$. 
Let $\beta\in H_2(Y,\Z)$ be the class of the fibre of $\pi$. Then for the lowest possible value of the Euler characteristic $n$, 
the moduli space of ideal sheaves $\M(Y, \beta,n)$ is isomorphic to $C$ itself, each point of $C$ parametrizing the ideal 
sheaf of the corresponding fibre of $\pi$. 

This is a non-singular (reduced) moduli space, and one d-critical
locus structure on it is given by the choice $(C, C, 0, {\rm id}_C)$, 
with d-canonical divisor $K_C^{\otimes 2}$. This admits the obvious square root $K_C$, giving the trivial coefficient
system $\Q_C[1]$ on $C$, the cohomological DT invariant
$H^*(C,\Q_C[1])$, and the numerical DT invariant $2g-2$. Note that for
$g=0$, this numerical DT invariant is negative. 

There are other choices of orientation: for any $2$-torsion line bundle $L\in\Pic^0(C)$, we get another 
orientation $K_C\otimes L$. A nontrivial $L$ corresponds to an unramified double cover $q:D\to C$. There is correspondingly 
a decomposition $q_*\Q_D = \Q_C \oplus \cL$, with $\cL$ a nontrivial rank-one local system on the curve $C$. With this choice
of orientation data, the cohomological DT invariant becomes the cohomology $H^*(C, \cL[1])$, a kind of Prym cohomology. 
\end{example}

\subsection{Hodge structure on cohomological DT and its purity}

As explained in the Appendix, given an algebraic variety or scheme
$X$, there is a category whose objects carry richer information than perverse sheaves on $X$: 
the category of mixed Hodge modules. In the case of a single critical
chart $(R, U, f, i)$, we can consider the mixed Hodge module 
\[  \phi_f\Q_U(\dim U/2)[\dim U]\in\MMHM(R)_\half\]
to be our Hodge-theoretic DT sheaf; see Section~\ref{subsec:Hodge} for
the definition of the category $\MMHM(R)_\half$ of {\em monodromic}
mixed Hodge modules on $R$ and its
half-twist functor, which is needed in the definition above to get duality right. 
Its cohomology carries a (polarized) mixed Hodge structure with monodromy action. 
Given a d-critical locus structure 
on a scheme $\M$, the glueing procedure described above, leading to the DT sheaf $\Phi$, can be performed on the level of 
mixed Hodge modules with monodromy~\cite{BBJS}. Consequently the cohomological DT invariant acquires a mixed Hodge structure 
too; the cohomology groups $H^i(\M, \Phi)$ and dually $H_c^i(\M, \Phi)$  become objects in the category $\MMHS_\half$ 
of monodromic mixed Hodge structures. 

\begin{example}\label{ex:isolated}
Let $f\colon \C^n\to \C$ be a polynomial, with an isolated critical point at $p\in \C^n$ which we will assume 
without loss of generality to be at the origin. By shrinking $\C^n$ to an open set $U$, we can assume that for $f\in U\to\C$, we
have $\Crit(f)_{\rm red} = \{p\}$ the origin. In this case, the cohomology $H^*(\phi_f\Q_U[\dim U])$ is essentially the cohomology 
of the Milnor fibre of $f$ at $p$. This cohomology carries a natural Hodge structure by work of Steenbrink~\cite{Steen}. In general,
this Hodge structure is mixed and not pure: it carries a non-trivial weight filtration. By another result of Steenbrink~\cite{Steen2}
however, the mixed Hodge structure is pure if~$f$ is {\em quasi-homogeneous}, that is if there is a set of integral weights on the 
coordinates of $\C^n$ which makes $f$ homogeneous of positive weight.
\end{example}

A natural question in the theory is what would in general guarantee the purity of the cohomological
DT invariant. Recall that in the classical case of constant coefficients, the cohomology $H^k(X, \Q)$ of a variety $X$ carries 
a mixed Hodge structure which is pure of weight $k$ if~$X$ is smooth and projective. As mentioned above, DT-style moduli spaces $\M$ are
smooth in the appropriate (derived) context, and so it would have perhaps been reasonable to expect that as soon as they are projective 
also, the cohomological DT invariant carries a pure Hodge structure. As Example~\ref{ex:isolated} shows, this already fails
when the reduced scheme underlying $\M$ is a single point. 
However, the final result quoted in the example also shows that one can hope for 
purity in the presence of a $\C^*$-action. It is also known in the literature on Landau--Ginzburg models
that the presence of suitable $\C^*$-actions leads to stronger results than are possible in
the general case. In this context, the following is the best result known. 

\smallskip

\begin{proposition} \cite{DMSS} Let $f\colon U\to\C$ be a regular function on a smooth variety~$U$. 
Assume that~$U$ carries a $\C^*$-action, so that~$f$ is equivariant with respect to the 
weight~$d$ action of~$\C^*$ on~$\C$, with $d>0$. Assume also that the critical locus $R=\Crit(f)$ of~$f$ is proper. 
Then the mixed Hodge structure on $H^*(R, \phi_f\Q_U)$ is pure. 
\label{prop:purity}\end{proposition}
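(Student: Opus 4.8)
The plan is to exploit the $\C^*$-action to force the monodromy on the vanishing cycle sheaf to be of finite order, and then to deduce purity from Saito's theory of mixed Hodge modules, using the properness only of the support $R$.

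First I would record the elementary consequences of homogeneity. Writing $\xi$ for the Euler vector field generating the action, equivariance of $f$ of weight $d$ gives $\xi f = d\cdot f$. On the critical locus $R=\Crit(f)$ we have $\mathrm{d}f=0$, so $0=\mathrm{d}f(\xi)=\xi f = d\cdot f$; since $d>0$ this forces $f\equiv 0$ on $R$. Hence $R\subset f^{-1}(0)$, and the complex $\phi_f\Q_U$, a priori supported on $\Crit(f)\cap f^{-1}(0)$, is supported on the \emph{proper} scheme $R$. The action restricts to $U$, to $f^{-1}(0)$ and to $R$, and $\phi_f\Q_U$ underlies a $\C^*$-equivariant (monodromic) mixed Hodge module on $R$.

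The key step is to show that the monodromy automorphism $T$ of $\phi_f\Q_U$ is semisimple of finite order, equivalently that $N=\log T_u$ vanishes. Here the $\C^*$-action realizes the geometric monodromy explicitly: for $x\in f^{-1}(1)$ one has $f(\lambda\cdot x)=\lambda^d$, so the flow $\lambda=e^{2\pi i\theta/d}$ trivializes the family $\{f^{-1}(e^{2\pi i\theta})\}_{\theta\in[0,1]}$ over the unit circle, and the return map is the action of the root of unity $\zeta=e^{2\pi i/d}$, which satisfies $\zeta^d=1$. Thus $T^d=\mathrm{id}$ on nearby and vanishing cycles, giving $N=0$. This generalizes the classical finite-order monodromy of a weighted-homogeneous singularity already underlying the purity recalled in Example~\ref{ex:isolated}. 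With $N=0$ in hand, purity follows formally: the vanishing cycle functor sends the pure Hodge module $\Q_U[\dim U]$ (pure since $U$ is smooth) to a module whose weight filtration is the monodromy weight filtration attached to $N$, which collapses to a single step when $N=0$; so $\phi_f\Q_U$, after the standard shift and Tate twist as in $\MMHM(R)_\half$, underlies a pure Hodge module on $R$. Since $R$ is proper, the projection $a\colon R\to\mathrm{pt}$ is proper, and by Saito's theorem $a_*$ of a pure Hodge module is pure; taking cohomology yields pure Hodge structures, with each $H^k(R,\phi_f\Q_U)$ pure of the expected weight.

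The main obstacle I anticipate is the rigorous identification, inside Saito's formalism, of the abstract monodromy operator on the vanishing cycle mixed Hodge module with the geometric $\C^*$-return map, so as to conclude $N=0$ at the level of mixed Hodge modules and not merely on the underlying cohomology sheaves; once the monodromy is pinned down, the implication $N=0\Rightarrow$ purity and the fact that properness of the support $R$ (rather than of $U$, which need not be proper) suffices are comparatively formal.
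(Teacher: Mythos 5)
Your plan has a genuine gap at its central step, and it is not the one you flagged as "comparatively formal" --- it is fatal as stated. The $\C^*$-flow $h_\theta(x)=e^{2\pi i\theta/d}\cdot x$ does trivialize the \emph{global} fibration $f^{-1}(D^*)\to D^*$, so it controls the monodromy on the cohomology of the global Milnor fibre $f^{-1}(c)$, and also the stalk monodromy at $\C^*$-\emph{fixed} points (there the action can be linearized, so $S^1$-invariant Milnor balls exist and the return map is literally the action of $\zeta=e^{2\pi i/d}$; this is exactly Steenbrink's setting of Example~\ref{ex:isolated}). But the monodromy $T$ you need to kill is an endomorphism of the \emph{sheaf} $\phi_f\Q_U$, whose stalk at a point $p\in\Crit(f)$ is the cohomology of the \emph{local} Milnor fibre at $p$ with its \emph{local} monodromy; at a non-fixed point $p$ the flow does not preserve any Milnor ball around $p$, and the identification of $T_p$ with the global return map simply fails. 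Concretely, take $U=\C^3$, $f=x^2z-y^3$, with the standard scaling action, so $f$ is equivariant of weight $d=3$. Then $\Crit(f)$ is the $z$-axis, and at $p=(0,0,1)$ one has $f=u^2-y^3$ in suitable local analytic coordinates, so the stalk of $\phi_f\Q_U$ at $p$ carries the $A_2$-cusp monodromy, whose eigenvalues are primitive sixth roots of unity: $T_p^3=-\mathrm{id}\neq\mathrm{id}$ on $H^1$. So ``$T^d=\mathrm{id}$ on $\phi_f\Q_U$'' is false for $\C^*$-equivariant $f$ in general. Worse, even semisimplicity ($N=0$), which is all your purity deduction really needs, fails: homogenizing a non-quasi-homogeneous isolated singularity such as $T_{2,3,7}=x^2+y^3+z^7+xyz$ and suspending it along a $\C^*$-orbit produces equivariant $f$ whose vanishing-cycle sheaf has nontrivial Jordan blocks at points of $\Crit(f)$. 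Note also that your monodromy step never uses properness of $R$ --- a warning sign, since these counterexamples (with non-proper $R$) show the step proves a false statement; any repair would have to use properness in an essentially local way, and it is not clear how to do so.

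The remaining steps of your outline are individually sound (the Euler-field argument giving $f|_R=0$ is correct and is also the first lemma of the cited proof; $N=0$ on the vanishing cycles of a pure Hodge module would indeed force purity, since Saito's weight filtration there is the relative monodromy filtration; and proper pushforward of a pure module is pure), but they never get off the ground. The actual argument of \cite{DMSS} --- which this review only cites, with the remark that it follows from Saito's technology --- does not assert any sheaf-level finiteness of monodromy. Instead it is a global weight-sandwich argument: after the Euler lemma one passes to a $\C^*$-equivariant compactification, and the $\C^*$-flow together with properness of $R$ and $d>0$ is used to prove comparison isomorphisms (``no vanishing cycles at infinity'') identifying $H^*_c$ and $H^*$ of $\phi_f\Q_U$ with expressions built from compactly supported, respectively ordinary, cohomology of $f^{-1}(0)$ and of the generic fibre. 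Deligne--Saito weight estimates (for any variety $H^k_c$ has weights $\leq k$; for a smooth variety $H^k$ has weights $\geq k$; the $j_!$/$j_*$ yoga) then bound the weights from above on the compactly supported side and from below on the ordinary side, and properness of $R$ identifies the two, forcing purity. The crucial point your proposal misses is that the vanishing-cycle functor has no one-sided weight behaviour and no monodromy control at non-fixed points, so purity here is genuinely a statement about the global cohomology, extracted by squeezing, not a statement about the DT sheaf itself.
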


Note that, as before, purity has to be interpreted appropriately in the monodromic sense.
This is not a difficult result given Saito's technology~\cite{S1, S2}; it is also far from optimal. It seems reasonable
to expect that if $\M$ is a projective scheme with a $\C^*$-action and a $\C^*$-equivariant oriented d-critical locus structure, then 
the cohomological DT invariant attached to $\M$ carries a pure Hodge structure. 

\begin{example} \label{ex:threeP1s} To give a non-trivial example of Proposition~\ref{prop:purity}, consider a compactification
of Example~\ref{ex:threeA1s}: let $X=\P^1\cup_p\P^1\cup_p\P^1$ be the
union of three projective lines meeting at a point $p\in R$. This scheme can be given a d-critical locus structure by using
$R$ from Example~\ref{ex:threeA1s} as one of the charts, and three trivial charts in the neighbourhoods of compactifying points. 
In turns out however~\cite[Sect. 6]{Ef} that $X$ can even be covered by a single critical chart $(R',U',f',i')$, using an 
auxiliary quiver 
construction. For this chart, all conditions of Proposition~\ref{prop:purity} are satisfied. Indeed, $H^*(X, \Phi)$
can be computed to be isomorphic to the cohomology of two disjoint copies of $\P^1$, and thus indeed pure.
\end{example}

\begin{example}\label{ex:coni} Another interesting example arises in the stable pairs theory of the local resolved
conifold geometry $Y=\O_{\P^1}(-1,-1)$, discussed first in~\cite[Section 4.1]{PT}. For the lowest degree curve class,
all moduli spaces are nonsingular. For the case of the curve class having multiplicity two, for the lowest value of the 
Euler characteristic the moduli space is still nonsingular. The next moduli space $\M$ is more interesting. 
The reduced variety underlying~$\M$ is isomorphic to $\P^3$. As discussed in~\cite[Section 4.1]{PT} however, this cannot be the 
full answer: the numerical pairs invariant equals $4$ and not $-4$, which would be the answer in case 
the moduli space were just a smooth $\P^3$. The moduli space $\M$ is in fact a non-reduced scheme, 
the thickening of $\P^3$ along the embedded quadric $Q\subset\P^3$, with Zariski tangent spaces of 
dimension $4$ along the quadric. 

The corresponding DT sheaf was computed, under an assumption about the local form of the potential, in~\cite[Ex.4.5]{Sz}. 
Indeed, the simplest potential that gives the quoted non-reduced behaviour is of the form $f=x^2y$ in some local variables
$x,y$ on a smooth space $U$. Assuming that this is indeed the local form of the potential at all points of the embedded 
quadric in $\P^3$, it can be shown that the DT sheaf $\Phi$ on $\M$ is the intersection cohomology sheaf $\IC(\cL)$ for a rank-one local 
system $\cL$ on $\P^3\setminus Q$ with nontrivial $\Z/2$ monodromy. 

On the other hand, it follows from~\cite{NN} that this space $\M$ is also a global degeneracy locus, and for the
corresponding critical chart all conditions of Proposition~\ref{prop:purity} are satisfied. Indeed,  $H^*(\M, \Phi)$
is isomorphic to the cohomology of the quadric $Q$, and thus pure. 
\end{example}

\subsection{Summary of properties of the cohomological DT invariant}

In summary, let $\M$ be a scheme equipped with an oriented d-critical locus structure; let
$\Phi$ be the corresponding DT (perverse) sheaf on $\M$. Then the cohomological DT invariant
$H^*(\M, \Phi)$ attached to $\M$ is a graded vector space with the following properties and carrying the following structures. 
\begin{enumerate}
\item The Euler characteristic of the cohomological DT invariant 
\[ \sum_i (-1)^i \dim H^i(\M, \Phi) = \int_\M \nu_\M d\chi
\]
agrees with the numerical DT invariant computed from the Behrend function $\nu_\M$ of $\M$. 
\item $H^*(\M, \Phi)$ carries the action of the monodromy endomorphism $T$.
\item The self-duality of $\Phi$ under Verdier duality implies that
  there is a duality isomorphism between $H^*(\M, \Phi)$ and
  $H_c^*(\M, \Phi)$. In particular, if $\M$ is proper, then $H^*(\M, \Phi)$ is self-dual (satisfies Poincar\'e duality). 
\item $H^*(\M, \Phi)$ carries a monodromic mixed Hodge structure. 
\item If $\M$ is proper, and covered by a single $\C^*$-equivariant critical chart with positive $\C^*$-weight on the base, 
then the monodromic mixed Hodge structure on $H^*(\M, \Phi)$ is pure. 
\end{enumerate}
This in particular applies when $\M$ is a projective moduli scheme of stable
sheaves on a smooth projective Calabi--Yau threefold $Y$. The cohomological, as opposed
to the numerical, DT invariant is generally {\em not} invariant under
deformations of the underlying Calabi--Yau threefold $Y$; see Section~\ref{subsec:def_inv}
below for further discussion. 

\section{Some computational results}

\subsection{The cohomological DT invariant in the $K$-group}
\label{subsec:refined}
It appears difficult to compute cohomological DT invariants directly. Most existing computations use an indirect route, 
computing on an appropriate locally closed stratification of the moduli space. Such results will give weaker answers than the
computation of the full cohomology of the DT sheaf, since cohomology is not additive on stratifications. To be able to
use the long exact sequence on cohomology, we also switch to compactly
supported cohomology (which, by Poincar\'e duality, 
carries the same information). So given $(\M, \Phi)$, we set
\[ [H_c^*(\M, \Phi)] = \sum_i (-1)^i[H^i_c(\M, \Phi)] \in K(\MMHS_\half)
\] 
to be the formal alternating sum of cohomologies inside the $K$-group of monodromic mixed Hodge structures. 
For this invariant, given a decomposition
$\M = U \cup Z$ into an open subscheme and its complement, the long exact sequence in cohomology indeed gives
\begin{equation} [H_c^*(\M, \Phi)] = [H_c^*(U, \Phi|_U)] + [H_c^*(Z, \Phi|_Z)]\in K(\MMHS_\half).
\label{eq:Kth}\end{equation}

Because of cancellations in the $K$-group, an element $\alpha\in K(\MMHS_\half)$ does not allow one to reconstruct the original 
complex $A^*$ of monodromic mixed Hodge structures giving rise to it, or even the dimensions of the various pieces of cohomology. 
One piece of information that survives is the {\em weight polynomial}, defined in Section~\ref{sec_Kmixed}, living in $\Z[q^{\pm\half}]$.
The corresponding one-variable deformation or quantization of the numerical DT invariant is often referred
to as a {\em refined DT invariant}~\cite{IKV, DG}.

On the other hand, the situation changes completely if we know that the element $\alpha=[H_c^*(\M, \Phi)]$ 
comes from $H_c^*(\M, \Phi)$ with {\em pure} Hodge structure. In that case, there can be no cancellation between 
different terms in~\eqref{eq:Kth}, and so the knowledge of the $K$-theory element enables one to fully determine the dimensions
of the cohomology groups $H_c^*(\M, \Phi)$, and hence $H^*(\M, \Phi)$. In this case, the weight polynomial becomes simply the Poincar\'e 
polynomial.

In the next two subsections, we give the answers in the $K$-group to some cohomological DT problems from earlier, 
and discuss what is known about the full cohomology.

\subsection{The cohomological DT invariants of $\C^3$}

Consider 
\[
Z_{\C^3}(t) = 1 + \sum_{n\geq 1} [H_c^*(\Hilb^n(\C^3), \Phi_n)] t^n \in K(\MMHS_\half)[[t]],
\]
with $\Phi_n$ the DT sheaf on $\Hilb^n(\C^3)$, obtained from the critical locus interpretation of Proposition~\ref{prop:hilb}.
This series turns out to admit an elegant infinite product form in
terms of the cohomology $\L$ of the affine line (see
Section~\ref{sec_Kmixed}). 

\begin{theorem} \cite{BBS} We have
\[Z_{\C^3}(t) = \prod _{m=1}^{\infty }\prod _{k=0}^{m-1}\left(1-\L^{k+2-\frac{m}{2}}\,t^{m} \right)^{-1} \in K(\MMHS_\half)[[t]].
\]
\label{thm:BBS}
\end{theorem}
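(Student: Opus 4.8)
The plan is to exploit the critical-chart description of $(\Hilb^n(\C^3), \Phi_n)$ furnished by Proposition~\ref{prop:hilb}: the DT sheaf is $\Phi_n = \phi_{W_n}\Q_{\NCHilb^n_3}$, suitably twisted and shifted, for the potential $W_n = \Tr X_1[X_2, X_3]$ on the smooth variety $\NCHilb^n_3$. The single structural feature that makes the whole computation tractable is that $W_n$ is \emph{linear} in the matrix entries of $X_1$: its derivative in the $X_1$-directions is exactly the section $[X_2, X_3]$.

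First I would perform a \emph{dimensional reduction} along the $n^2$ linear $X_1$-coordinates. For a regular function that is linear on the fibres of a vector bundle, the (monodromic) vanishing-cycle cohomology is computed by the ordinary cohomology of the zero locus of the associated section, up to an explicit Tate twist by a power of $\L$. Applied here, this collapses the genuinely three-dimensional vanishing-cycle problem to an honest cohomology computation on the locus $\{[X_2, X_3]=0\}$ of commuting pairs together with their cyclic vector; concretely, in the $K$-group one obtains $[H_c^*(\Hilb^n(\C^3), \Phi_n)] = \L^{c_n}\,[H_c^*(\mbox{commuting-pair locus})] \in K(\MMHS_\half)$ for an explicit half-integer $c_n$ fixed by the half-twist normalization. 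The payoff is that all sign and monodromy subtleties of the DT sheaf are traded for a computation with pure Tate coefficients.

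Next, to organize the resulting cohomology across all $n$, I would pass to the generating series and use torus localization. The torus $(\C^*)^3$ acts on each $\Hilb^n(\C^3)$ with isolated fixed points indexed by $3$-dimensional partitions $\pi$ of $n$ (the monomial ideals), compatibly with $W_n$ and with the reduction above. Combining localization with a power-structure (plethystic) factorization of the Hilbert-scheme series, reducing the global count to the punctual contribution at the origin, expresses $Z_{\C^3}(t)$ as a sum over $3$-dimensional partitions of monomials in $\L^{\pm\half}$, where the exponent at $\pi$ is read off from the $(\C^*)^3$-weights on the symmetric tangent--obstruction theory at the corresponding fixed point; the symmetry pairs the weights and pins down the half-integer powers. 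A generating-function manipulation of this sum then yields the double product $\prod_{m\geq1}\prod_{k=0}^{m-1}(1-\L^{k+2-\frac{m}{2}}t^m)^{-1}$.

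The main obstacle is carrying out these operations at the refined level of $K(\MMHS_\half)$ rather than merely for Euler characteristics: one must control the class of $\phi_{W_n}$, with its monodromy and its half-twist, under dimensional reduction and under torus localization, so that the weights and not just the signs are tracked correctly. In particular, establishing a Hodge-theoretic localization formula in the vanishing-cycle setting, and verifying that the per-fixed-point weight is exactly $\L^{k+2-\frac{m}{2}}$, is the crux; the final combinatorial resummation into the product is then routine. As a consistency check, specializing $\L^{\half}\mapsto -1$ collapses each inner product of $m$ equal factors to $(1-(-t)^m)^{-m}$ and recovers the numerical formula of Theorem~\ref{thm:macmahon}.
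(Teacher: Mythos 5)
Your first step is exactly the key idea of the proof in \cite{BBS}, the source the paper cites for this theorem: the potential $W_n=\Tr X_1[X_2,X_3]$ of Proposition~\ref{prop:hilb} is linear in the $n^2$ entries of $X_1$, and dimensional reduction along those variables trades the monodromic vanishing-cycle class for an explicit Tate twist of the class of the honest variety $\{(X_2,X_3,v)\colon [X_2,X_3]=0,\ v \mbox{ cyclic}\}$. Up to that point your proposal and the actual argument coincide.

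The second half is where there is a genuine gap. You propose to finish by torus localization, writing $Z_{\C^3}(t)$ as a sum over $3$-dimensional partitions with a monomial $\L^{\pm}$ attached to each monomial ideal, read off from the $(\C^*)^3$-weights of the symmetric obstruction theory at that fixed point. No such localization formula is available in $K(\MMHS_\half)$: the Behrend--Fantechi fixed-point analysis of \cite{BF} produces the numerical contribution $(-1)^n$ per fixed point, but classes in the $K$-group of (monodromic) mixed Hodge structures of singular schemes with vanishing-cycle coefficients are not determined by fixed-point data, and the review itself stresses (in its discussion of localization) that away from the Euler-characteristic level the known techniques do not reduce the computation to torus-fixed points. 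Establishing a per-partition refined weight would amount to a mathematical derivation of the refined topological vertex of \cite{IKV}, which is precisely the open problem that Theorem~\ref{thm:BBS} was needed to resolve; moreover the only way to know that the correct weight is $\L^{k+2-\frac m2}$ is to already know the theorem, so this step is circular as well as unsupported. What \cite{BBS} actually does after dimensional reduction never touches fixed points: a stratification argument removes the cyclic vector, relating the framed space to the stack of commuting pairs of matrices modulo $\GL(V)$ via a generating-series identity; the motive of the commuting variety is then given by (the motivic version of) the Feit--Fine formula as an explicit infinite product; and a power-structure resummation, which also effects the reduction to punctual Hilbert schemes, yields the double product. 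Your closing check that $\L^\half\mapsto-1$ recovers Theorem~\ref{thm:macmahon} is correct, but that specialization is the one part of the statement that was never in doubt.
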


This is the precise analogue of G\"ottsche's formula, which computes the cohomology of the Hilbert scheme of points 
of a smooth quasiprojective surface, for the space $Y=\C^3$. It also allows one to confirm a computation of refined DT invariants 
in the physics literature~\cite{IKV}. Setting $\L^{\half}=-1$, we
recover Theorem~\ref{thm:macmahon}. It was very recently proved in
\cite{Dav2} that the critical cohomology carries 
a pure Hodge structure, so this formula encodes all the  cohomological DT invariants in this example.

\begin{example} For the case of four points, an expansion gives
\[ [H_c^*(\Hilb^4(\C^3), \Phi_4)] = \L^6 + \L^5 + 3 \L^3 + 3 \L^3 + 3 \L^2 + \L +1 \in K(\MMHS_\half).\]
This result was proved earlier in~\cite{DS}. 
\end{example}

\subsection{Other examples}

Other local cases where values of cohomological DT invariants are known, at least in the $K$-group, 
include~\cite{MMNS, MN, DaM}; the last reference gives a computation
which involves nontrivial monodromy on the DT invariant. 
I~spell out just one more example. Let $Y=\O_{\P^1}(-1,-1)$ be the resolved conifold geometry, containing the
zero-section $Y\supset C\cong\P^1$. Let $\cN_{d[C],n}$ be the moduli space of stable pairs~\cite{PT}, supported on the curve class $d[C]$
and with Euler characteristic $n$, carrying the DT sheaf $\Phi_{d,n}$. Let
\[ Z_Y(t,T) = \sum_{n,d} \left[H^*\left(\cN_{d[C],n}, \Phi_{d,n}\right)\right] T^d t^n.
\]
\begin{theorem}\cite{MMNS} We have
\[ Z_Y(t,T) = \prod_{m\geq 1}\prod_{j=0}^{m-1} \left(1- \L^{-\frac m2+\half+j} t^m T \right).
\]
\end{theorem}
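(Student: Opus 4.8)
The plan is to reduce the computation to a wall-crossing calculation for the conifold quiver with potential, and then to evaluate the resulting vanishing-cycle cohomology by exploiting purity.

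First I would use the identification of~\cite{NN}: in the stability chamber corresponding to stable pairs, each moduli space $\cN_{d[C],n}$ is isomorphic to a moduli space of $\theta$-stable framed representations of the conifold quiver $(Q,W)$ of Example~\ref{ex:conifold}, for the dimension vector $\dd$ determined by $(d,n)$. By Proposition~\ref{prop:quiverrepcrit} this exhibits $\cN_{d[C],n}$ as the scheme-theoretic critical locus $\Crit(W^j_\dd)$ of a regular function on a smooth quasiprojective variety, and under this identification the DT sheaf $\Phi_{d,n}$ is the corresponding (shifted) vanishing cycle sheaf $\phi_{W^j_\dd}\Q_{\cR^j_\dd(Q)}$. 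This places the entire family of moduli spaces uniformly inside the critical/vanishing-cycle framework, so that the classes $[H^*(\cN_{d[C],n},\Phi_{d,n})]\in K(\MMHS_\half)$ are exactly what we must compute.

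Second I would assemble the series over all $(d,n)$ --- together with the analogous framed series in the remaining stability chambers --- into a single motivic generating identity, using the additivity~\eqref{eq:Kth} over the natural stratifications of the quiver moduli and the Kontsevich--Soibelman integration/wall-crossing map. The key structural input is that the full (noncommutative) motivic DT series of the conifold quiver factorizes as a product of motivic quantum-dilogarithm factors, one for each ray in the associated scattering diagram, i.e.\ one for each class supporting stable objects. For the conifold these classes and their refined contributions are explicit: the class attached to $m[C]$ (together with its $D0$-bound states) carries a refined BPS invariant equal to the character of the spin-$\frac{m-1}{2}$ representation of the Lefschetz $\mathfrak{sl}_2$, whose weights are precisely $\L^{(2j-m+1)/2}$ for $j=0,\dots,m-1$. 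Passing from the trivial chamber to the stable-pairs chamber crosses exactly the walls producing the factors $\prod_{j=0}^{m-1}\bigl(1-\L^{-\frac m2+\half+j}\,t^mT\bigr)$, which is the asserted product.

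Third --- and this is what guarantees that the $K$-theoretic answer faithfully records the actual cohomology rather than an alternating sum subject to cancellation --- I would invoke purity. The conifold moduli spaces carry a $\C^*$-action induced by the torus acting on $Y=\O_{\P^1}(-1,-1)$, under which the potential is equivariant of positive weight, so Proposition~\ref{prop:purity}, in its monodromic form, shows that each $H^*(\cN_{d[C],n},\Phi_{d,n})$ is pure. Purity forbids cancellation in~\eqref{eq:Kth}, so the product identity in $K(\MMHS_\half)$ upgrades to the stated identity for the cohomology itself. The hard part will be the bookkeeping of the monodromic mixed Hodge structure through the vanishing-cycle functor and the wall-crossing formula: tracking the half-integer Tate twists $\L^{\half}$ and the monodromy so that the exponents emerge as $-\frac m2+\half+j$ rather than merely up to an overall normalization, and verifying that the refined BPS spectrum of the conifold is exactly this spin-$\frac{m-1}{2}$ tower. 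Establishing the motivic quantum-dilogarithm factorization with the correct twists is the technical crux.
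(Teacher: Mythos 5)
Your overall route --- identify the pairs moduli spaces with framed conifold-quiver moduli via \cite{NN}, realize the DT sheaves as vanishing-cycle sheaves via Proposition~\ref{prop:quiverrepcrit}, establish a motivic product factorization by wall-crossing, and then use purity to pass from $K(\MMHS_\half)$ to actual cohomology --- is indeed the strategy of the cited proof in \cite{MMNS}, and your purity step matches the remark the paper makes immediately after the theorem. However, there is a genuine gap at the centre of your argument: the factorization with exactly the factors $\prod_{j=0}^{m-1}\bigl(1-\L^{-\frac m2+\half+j}t^mT\bigr)$ is \emph{asserted} as known input (``the refined contributions are explicit\dots spin-$\frac{m-1}{2}$''), but this is precisely the content of the theorem being proved; quoting it is circular. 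In \cite{MMNS} these factors are \emph{derived}: the conifold potential $W=a_1b_1a_2b_2-a_1b_2a_2b_1$ of Example~\ref{ex:conifold} is linear in some of the arrows, which permits a dimensional-reduction computation of the universal (unframed) motivic series, and the framed series in each of the Nagao--Nakajima chambers is then obtained from it by an explicit framed/unframed correspondence. Without this (or an equivalent) computation, your step two is not a proof but a restatement of the claim. Note also that invoking the full Kontsevich--Soibelman integration map at the motivic level carries foundational caveats that the stratification-based argument of \cite{MMNS} avoids.

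Two further inaccuracies would derail the bookkeeping you rightly identify as the crux. First, the factor $\bigl(1-\L^{-\frac m2+\half+j}t^mT\bigr)$ carries $T^1$, so it is attached to a class with D2-charge $1$ (curve class $[C]$ with $m$ points, a real root of the affine $A_1$ lattice), not to ``$m[C]$''; moreover the $m$-dimensional Lefschetz multiplet structure arises from the framing pairing in the framed wall-crossing factor (the refinement of the exponent $m$ in the numerical formula $\prod_m(1-(-t)^mT)^m$), not from an intrinsic spin-$\frac{m-1}{2}$ refined BPS invariant --- the conifold's refined BPS invariant at each real root is just $1$. Second, the $\C^*$-action feeding into Proposition~\ref{prop:purity} is not the one induced by the torus on $Y=\O_{\P^1}(-1,-1)$: what is needed is an action on the smooth ambient space of a critical chart making the potential equivariant of positive weight, and this is supplied by the scaling action on the quiver representation space (the quartic $\Tr W$ has weight $4$ when all arrows are scaled with weight $1$), which is what the paper means by purity following ``from the quiver description''. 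Finally, a small logical point: as stated, the theorem is an identity in $K(\MMHS_\half)[[t,T]]$, so purity is not needed for the theorem itself, only for the subsequent assertion that it determines the full cohomology.
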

In this case, it is known from the quiver description of these moduli spaces~\cite{NN} that Proposition~\ref{prop:purity} applies, and 
the underlying cohomology is pure; therefore this result gives a full computation of the cohomology. The cohomological 
DT invariant of Example~\ref{ex:coni} can be read off from expanding this series.

\section{The Kontsevich--Soibelman cohomological Hall algebra}

One of the reasons to be interested in a cohomological extension of Donaldson--Thomas theory is that since this
invariant takes values in a category rather than a ring, one can
define maps between different cohomology groups. Indeed,
starting from a quiver with potential, Kontsevich--Soibelman~\cite{KS2} showed how one can define an associative
algebra structure on cohomological DT. This construction can be viewed as a precise mathematical realization of the 
Harvey--Moore algebra of BPS states~\cite{HM}. 

\subsection{Quivers without potential}

Let us first cover the case when we start with a quiver $Q$ with potential $W=0$. As in Section~\ref{subsec:reps}, 
given a dimension vector $\dd=(d_i)\in \N^{V(Q)}$, we fix vector spaces $\{U_i\}$ of the right dimension. 
The space of all representations 
\[ N_\dd = \prod_{a\in E(Q)} \Hom(U_{t(a)},  U_{h(a)}),\]
is acted on by the group 
\[
G_\dd = \prod_{i\in V(Q)} \GL(U_i).
\]
We get the quotient stack $\cR_\dd(W)= [N_\dd/G_\dd]$ and its cohomology $H^*_{G_\dd}(N_\dd, \Q)$. Let
\[ \cH_Q = \bigoplus_{\dd\in \N^{V(Q)}}H^*_{G_\dd}(N_\dd, \Q).
\]
\smallskip

\begin{theorem} \cite{KS2} The vector space $\cH_\Q$ admits a product operation, under which it becomes an associative
unital $\N^{V(Q)}$-graded algebra.
\end{theorem}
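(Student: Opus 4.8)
The plan is to realize $\cH_Q$ as a Hall-type algebra whose product is built from a correspondence parametrizing extensions of quiver representations (sub/quotient pairs), and then to prove associativity via a three-step flag correspondence together with base change. Fix dimension vectors $\dd_1,\dd_2$ and put $\dd=\dd_1+\dd_2$. Choose splittings $U_i=U_i^{(1)}\oplus U_i^{(2)}$ with $\dim U_i^{(k)}=(\dd_k)_i$, let $N_{\dd_1,\dd_2}\subset N_\dd$ be the closed subspace of representations $(\phi_a)$ preserving $\bigoplus_i U_i^{(1)}$ (the block upper-triangular locus), and let $G_{\dd_1,\dd_2}\subset G_\dd$ be the parabolic preserving the same flag, with Levi $G_{\dd_1}\times G_{\dd_2}$. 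I would then form the $G_{\dd_1,\dd_2}$-equivariant correspondence
\[
N_{\dd_1}\times N_{\dd_2}\xleftarrow{\ p\ }N_{\dd_1,\dd_2}\xrightarrow{\ \iota\ }N_\dd,
\]
where $p$ records the diagonal blocks (the sub- and quotient-representation) and $\iota$ is the inclusion. The product of $\alpha\in\cH_{\dd_1}$ and $\beta\in\cH_{\dd_2}$ is the composite
\[
\cH_{\dd_1}\otimes\cH_{\dd_2}\xrightarrow{\ p^*\ }H^*_{G_{\dd_1,\dd_2}}(N_{\dd_1,\dd_2})\xrightarrow{\ \iota_*\ }H^*_{G_{\dd_1,\dd_2}}(N_\dd)\xrightarrow{\ \pi_*\ }H^*_{G_\dd}(N_\dd),
\]
using the K\"unneth isomorphism on the source. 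Here $p^*$ is an isomorphism because $p$ is a $G_{\dd_1,\dd_2}$-equivariant affine fibration (it forgets only the off-diagonal block $\bigoplus_a\Hom(U_{t(a)}^{(2)},U_{h(a)}^{(1)})$) and $G_{\dd_1,\dd_2}$ deformation-retracts onto its Levi; $\iota_*$ is the Gysin pushforward along a closed embedding of smooth equivariant spaces; and $\pi_*$ is the proper pushforward along the flag-bundle morphism $[N_\dd/G_{\dd_1,\dd_2}]\to[N_\dd/G_\dd]$, whose fibre is the product over vertices of Grassmannians $\Gr((\dd_1)_i,d_i)$.

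By construction $\alpha\cdot\beta\in\cH_\dd=\cH_{\dd_1+\dd_2}$, so the product respects the $\N^{V(Q)}$-grading; the induced shift in cohomological degree equals $2\sum_a(\dd_1)_{t(a)}(\dd_2)_{h(a)}$ (from $\iota_*$, the codimension of $N_{\dd_1,\dd_2}$) minus twice the relative dimension of the flag bundle (from $\pi_*$), a constant depending only on $\dd_1,\dd_2$, so that $\cH_Q$ is in fact bigraded. The unit is $1\in H^0_{G_0}(N_0)\cong\Q$ in the $\dd=0$ summand; checking that multiplication by it is the identity is routine, since one of the two flag steps becomes trivial and all three maps in the composite degenerate to identities. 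These verifications are formal once the maps are seen to be well defined independently of the chosen splittings, which follows from $G_\dd$-equivariance.

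The substance is associativity. For a triple $\dd_1,\dd_2,\dd_3$ with sum $\dd$, I would introduce the full-flag locus $N_{\dd_1,\dd_2,\dd_3}\subset N_\dd$ of representations preserving $0\subset\bigoplus_iU_i^{(1)}\subset\bigoplus_i(U_i^{(1)}\oplus U_i^{(2)})\subset U$, together with its parabolic $G_{\dd_1,\dd_2,\dd_3}$. Both $(\alpha\beta)\gamma$ and $\alpha(\beta\gamma)$ can be computed as a single pull-push over this three-step correspondence, but factored through two different intermediate two-step flags; the claim is that the two factorizations agree. I expect the base-change step to be the main obstacle: one must verify that the squares comparing the partial-flag spaces $N_{\dd_1,\dd_2}$, $N_{\dd_2,\dd_3}$ and $N_{\dd_1,\dd_2,\dd_3}$ are Cartesian and Tor-independent, so that $p^*$ commutes past $\iota_*$ (proper base change for the Gysin map) and the two orders of performing the flag-bundle pushforward coincide by functoriality of $\pi_*$ along composable proper smooth morphisms. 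The geometry of quiver representations makes this tractable: every $N_\bullet$ is an affine space, every partial-flag locus sits inside $N_\dd$ as a sub-vector-bundle, and every flag morphism is smooth and proper, so the relevant fibre products are transverse and base change applies cleanly. Assembling the resulting commutative diagram then yields $(\alpha\beta)\gamma=\alpha(\beta\gamma)$, completing the proof.
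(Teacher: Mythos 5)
Your construction is exactly the one the paper points to: the paper gives no proof of its own, citing \cite{KS2} and remarking only that the product ``arises from the standard correspondence involving subrepresentations and quotient representations, familiar from Hall algebra theory'' --- which is precisely your parabolic pull-push correspondence. The details you supply (Künneth plus affine-fibration pullback $p^*$, Gysin pushforward $\iota_*$, flag-bundle integration $\pi_*$, and associativity via the two-step flag locus with flat base change) are the Kontsevich--Soibelman argument, so the proposal is correct and takes essentially the same approach.
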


We refer to the original paper for the definition of the product, which essentially arises from the standard correspondence
involving subrepresentations and quotient representations, familiar from Hall algebra theory. Several examples are
computed in~\cite{KS2}. For example, it is clear that when $Q$ has only one vertex, $\cH_Q$ is isomorphic as a vector space 
to $\oplus_{n>0}H^*(\BGL(n, \C)))$. The algebra structure depends on the number of loops; for the case of no loop, respectively one 
loop, the algebra $\cH_Q$ is isomorphic~\cite[Sect. 2.5]{KS2} 
to the exterior, respectively symmetric algebra on infinitely many generators. 

A quiver $Q$ is called {\em symmetric} if for any $i,j\in V(Q)$ it has the same number of oriented edges from vertex $i$ to 
vertex $j$ as from vertex $j$ to vertex $i$. In the special case when $Q$ is symmetric, it is possible to use the cohomological
grading, together with a slight twist of the multiplication, to endow $\cH_Q$ with a $\N^{V(Q)}\times\Z$-graded supercommutative
structure. It was conjectured in~\cite{KS2}, and proved in~\cite{Ef1}, that in fact this algebra is {\em free} supercommutative. 

\begin{remark} It is important to note that the COHA itself only depends on a quiver (possibly with potential, see next section), not on any
notion of stability. Choosing a stability function gives a decomposition (involving a spectral sequence)  of the COHA into an
ordered product of more elementary algebras, similar to a decomposition for universal enveloping algebras which comes from 
the Poincar\'e--Birkhoff--Witt theorem. For details, see~\cite[Section 5]{KS2}.
\end{remark}

\subsection{Inserting the potential} \label{subsec_COHA_pot}
Let us equip $Q$ with a nonzero potential $W$. As in Proposition~\ref{prop:quiverrepcrit},
we get the function $W_\dd\colon N_\dd\to\C$, its critical locus $M_\dd=\Crit(W_\dd)$, and the stack 
$[M_\dd/G_\dd]$ of representations of the Jacobi algebra $J(Q,W)$. We set
\[\cH_{Q,W} = \bigoplus_{\dd\in \N^{V(Q)}}H^*_{G_\dd}(M_\dd, \phi_{W_\dd}\Q_{N_\dd}[\dim N_\dd-\dim G_\dd]).
\]
Once again, this becomes~\cite{KS2, Dav} a graded associative algebra. 

The structure of this algebra was studied further recently in~\cite{DavM2}, giving in particular an 
algebraic meaning to the BPS invariants $\Omega^\sigma(\alpha)$~of Section 1.5 in these quiver cases.
There are essentially no cases where the structure of
this algebra is fully known in concrete terms, though the dimensional reduction technique of~\cite{BBS}, adapted to the 
COHA in~\cite{KS2, Dav}, should allow interesting computations.

\subsection{Representations of the COHA}

One of the most exciting potential directions is to find representations of the COHA associated to a quiver $(Q,W)$, 
and related algebras, on the cohomological DT invariants attached to {\em stable} representations of $(Q,W)$ under appropriate 
stability conditions~\cite{Sz,S}. 

\begin{example} Let $(Q,W)$ be the three-loop quiver with potential from Example~\ref{ex:threeloop}. Then we have the cohomological 
Hall algebra $\cH_{Q,W}$, which should have an action on the vector space
\[ \cA = \bigoplus_{n\geq 0} H^*(\Hilb^n(\C^3), \Phi_n).
\]
Here $\Phi_n$ is the DT sheaf on $\Hilb^n(\C^3)$ obtained from the critical locus interpretation of Proposition~\ref{prop:hilb}. More 
precisely, there should be {\em two} actions, corresponding to adding and subtracting a representation, giving ``creation''
and ``annihilation'' operators. The resulting action of a double algebra $\widetilde\cH_{Q,W}$ would be the three-dimensional
analogue of the Grojnowski--Nakajima action~\cite{Gro, Nak1} of an infinite-dimensional Heisenberg algebra 
on $\bigoplus_{n\geq 0} H^*(\Hilb^n(\C^2), \Q)$.
Theorem~\ref{thm:BBS} could in turn be interpreted as a (refined) character formula for this action.
\end{example}

For the much simpler example of the $A_1$ quiver, the double COHA was explicitly computed recently in~\cite{X}.

\subsection{The global case} Constructing algebra structures on globally defined cohomological DT spaces,
involving moduli spaces covered by several critical charts, appears to be a very interesting 
challenge. Essentially nothing is known in this direction. 

\section{Further directions}

Without giving all, or indeed sometimes any, details, and without aiming for completeness, I~discuss some further research directions 
related to cohomological Donaldson--Thomas theory.

\subsection{Deformation (in)variance}\label{subsec:def_inv} As explained in Section~1, 
one of the properties of numerical DT invariants is that they are
constant in a smooth deformation family of projective Calabi--Yau
threefolds. One can ask more generally about deformation properties of
other flavours of DT invariants. For the case of the Hilbert scheme
of points of a general threefold, the result of~\cite{BBS} shows that
the motivic invariants only depend, additionally, on the motive of the
underlying threefold $Y$, as one would minimally expect. Thus, for
example, the weight polynomial refinement of DT remains constant in a
smooth deformation family, since it depends only on the weight
polynomial of $Y$, which is constant in a family. More
recently, \cite{KKP} observed (conjectural) constancy of the weight
polynomial realization of motivic stable pairs theory on thickened K3 surfaces. 

Refined DT invariants do not always need to be deformation
invariant however. 

\begin{example} \label{ex:ruledsurface:again} Let us return to the example studied
in Example~\ref{ex:ruledsurface}, with $Y$ a projective \cy\ threefold containing a 
divisor $E$ which is itself a $\P^1$-bundle $\pi: E\to C$ over a curve $C$ of genus $g$. 
It is well known that for $g>0$, the generic deformation $Y_t$ of $Y=Y_0$ contains $2g-2$ isolated rational curves in the
homology class of the fibre of the fibration $\pi$. Thus the moduli
space deforms from $\M_0=C$ to $\M_t$ being a disjoint union of
$2g-2$ reduced points. Clearly the numerical DT invariant $2g-2=
(-1)^{\dim C}\chi(C)$ is unchanged, but the weight polynomial, using
{\em trivial} orientation data, changes from the (shifted) Poincar\'e
polynomial of $C$ to the constant polynomial $2g-2$ and thus
jumps. Interestingly, it can be readily checked that with any choice
of {\em nontrivial} (Prym) orientation data, the weight polynomial
(though not the Hodge polynomial) remains {\em unchanged}.
\end{example}

In a different direction, a jump in refined DT invariants can also be
observed in a local situation, under homogeneous changes of potentials
on certain simple quivers~\cite{CMPS}.

\subsection{Knot invariants} Start with a singular curve $C$ with planar
singularities. One can then form the generating series of (ordinary,
topological) Euler characteristics of the Hilbert schemes of points
$\Hilb^n(C)$ of $C$. This series, and various refinements, were
conjectured by Oblomkov--(Rasmussen--)Shende~\cite{OS, ORS} to be given by formulae derived from the
HOMFLY(PT) invariants of the links around the different singular
points. These connections were later re-phrased and generalized~\cite{DSV, DHS}
via embedding $C$ in a local Calabi--Yau threefold,
and studying certain DT invariants of this threefold. Indeed, Maulik~\cite{M} recently proved the
original conjectures of~\cite{OS} using wall crossing in DT theory
(and other tools). Cohomological DT invariants in this story may be related
to categorifications of knot invariants such as Khovanov homology. 

\subsection{Quantum cluster positivity} Starting with a quiver $Q$ and
a fixed vertex $j\in V(Q)$, Fomin--Zelevinsky~\cite{FZ} introduced a
combinatorial operation called {\em mutation}, leading to another
quiver $\mu_j(Q)$. This is an operation that can then be repeated on
other vertices, usually leading to an infinite number of other quivers
mutation equivalent to $Q$. The mutation operation also associates
certain Laurent polynomials to vertices of all these quivers, related
by the so-called cluster transformation rule. It was a
conjecture of Fomin--Zelevinsky, recently proved in~\cite{SL}, that
all these Laurent polynomials have positive coefficients. 

The cluster mutation story has two extensions relevant for us. In one
direction, \cite{DWZ} proved that mutation extends under certain conditions
to an operation on quivers with potential. In~\cite{KY} it was shown that
this gives rise to very natural derived equivalences between
3-Calabi--Yau (dg) categories. Kontsevich--Soibelman~\cite{KS1} and
Nagao~\cite{Nag} then showed that wall
crossing of DT invariants of these categories reproduces a lot of the
structure of the cluster polynomials. 

In another direction, Berenstein--Zelevinsky~\cite{BZ} introduced a quantization of the
cluster story, introducing a version of the cluster transformation
rule in a mildly non-commutative, quantized Laurent polynomial ring. This allows one
to formulate a quantum cluster positivity conjecture, saying that even
in the quantized setting, all coefficients in the Laurent polynomials
are positive. 

The two lines of thought were brought together in work of Efimov~\cite{Ef},
who showed that certain very specific motivic DT invariants exactly
reproduce the cluster transformations of {\em quantized} Laurent
polynomials. As noticed in that paper, a purity result on {\em
  cohomological} DT invariants would give positivity for
quantum cluster polynomials.  By Proposition~\ref{prop:purity} above,
purity holds in certain $\C^*$-equivalent situations, leading to the
result~\cite{DMSS} that quantum cluster positivity is true for quivers
which can be equipped with a suitably non-degenerate {\em quasi-homogeneous} potential.
The proof of the quantum cluster positivity conjecture in complete
generality, relying still on cohomological Donaldson--Thomas theory, 
was very recently announced in~\cite{Dav3}.

\subsection{Localization} Since the seminal paper~\cite{GP}, many computations 
involving virtual fundamental classes were performed in the presence of a high degree of
(torus) symmetry, using localization. In the case of symmetric obstruction theories, 
localization is even more effective~\cite{BF}; as mentioned,
Theorem~\ref{thm:macmahon} was first proved
using torus localization. Away from the numerical case however, localization has so far 
proven much less effective. Many refined DT results, starting with~\cite{BBJ}, have used 
a dimensional reduction trick, which again relies on a certain kind of torus action; 
but this does not reduce the computation to torus-fixed points directly. Recently, there 
has been progress on this problem: Maulik~\cite{M2} introduced a framework 
which can, at least in theory, compute refined DT invariants via
localization in substantial examples, such as 
the case investigated earlier using a hands-on definition in~\cite{CKK}. It would
be interesting to generalize this work to cohomological DT invariants, where, 
as for intersection cohomology~\cite{Ki}, hyperbolic localization~\cite{Bra} is likely to play
an important role. 

\subsection{K-theoretic DT invariants} 

A radically new point of view on refinements of DT theory was presented recently by Nekrasov and 
Okounkov in~\cite{NO}. This work defines curve and sheaf counting on Calabi--Yau threefolds with values
in K-theory, taking the K-theory class of the {\em virtual structure sheaf} on the relevant moduli 
stack of sheaves. This construction is a holomorphic variant of the constructible DT-sheaf discussed 
in Section~\ref{sec:DT:sheaf}. 
Moreover, \cite{NO} also discusses a conjectural, not fully constructed theory of curve- (more precisely brane-)counting
on a Calabi--Yau fivefold $Z$, which connects to K-theoretic DT invariants on Calabi--Yau threefolds arising as 
torus-fixed loci inside the fivefold $Z$. Maulik's work~\cite{M2}, in turn, connects the K-theoretic and motivic
DT invariants under some (currently strong) hypotheses. The relationships between these various flavours of DT theory, 
as well as connections to geometric engineering, definitely deserve further study. 

\subsection{DT theory in higher dimensions}

Following on from one theme of the previous subsection, one may well wonder whether a sensible theory of 
sheaf counting exists on Calabi--Yau $d$-folds of arbitrary dimension greater than $3$. 
One hint that such a theory might exist
is that the product form of the generating series of motivic DT invariants of Hilbert schemes of points of a threefold $Y$, 
presented in~\cite{BBS}, admits a natural generalization for $\dim Y=d$ arbitrary, and gives the correct result both 
for $\dim Y<3$ and for small numbers of points. We thus have the answer for Hilbert schemes in general, we only need to 
formulate the right question! More importantly perhaps, the derived geometry results of~\cite{PTVV, BBJ, BG} are all 
dimension-independent: the general results are that moduli stacks of sheaves on a smooth $d$-dimensional projective Calabi--Yau 
variety carry a $(d-2)$-shifted symplectic structure, and there are normal forms for such structures generalizing the 
critical locus description for $d=3$. These results were used as the starting point to explore four-dimensional
DT theory in~\cite{BoJ}. A related approach relying on gauge theory was presented in~\cite{CCL}. 
It would be interesting to compute invariants of substantial examples in either of these frameworks. 

\appendix

\section{Perverse sheaves, vanishing cycles, and Hodge modules}

In this Appendix, I~recall aspects of the categories of constructible
and perverse sheaves and Hodge modules on complex algebraic varieties,
and the vanishing cycle construction, mainly to fix notation. Further
details can be found in~\cite{D, PS}; I~will not give original
references in this section. 

\subsection{Constructible and perverse sheaves}
\label{subsec:const}
Let~$X$ be a complex algebraic variety or scheme. The category ${\rm Sh}(X)$ of sheaves
of $\Q$-vector spaces in the classical (complex) topology has a subcategory $\Const(X)$ consisting of
{\em constructible} objects, sheaves which are local systems when
restricted to different strata of an {\em algebraic} stratification
of~$X$. Let $\D^b_c(X)$ denote the derived category of bounded complexes of
sheaves on $X$ whose cohomology sheaves are in $\Const(X)$.

If $f\colon X\to Y$ is a morphism of schemes, then we have pushforward
functors  $f_*, f_!\colon \D^b_c(X)\to \D^b_c(Y)$, the direct
image and direct image with compact supports functors.  There are also pullback
functors $f^*, f^!\colon \D^b_c(Y)\to \D^b_c(X)$, pullback and
extraordinary pullback. We also have a tensor product
$\stackrel{L}\otimes$, and a (Verdier) duality functor $\DD_X \colon
\D^b_c(X)\to \D^b_c(X)^{\rm op}$. These satisfy all the rules of the
six operation formalism; in particular, Verdier duality intertwines
between pushforward and compactly supported pushforward. 

For any scheme $X$ with structure morphism $a\colon X\to {\rm pt}$,
and the constant vector space $\Q\in \D^b_c( {\rm pt})$, let
$\Q_X=a^*\Q$. If $X$ is smooth, then we have an isomorphism
$\DD_X\Q_X\cong \Q_X[2\dim X]$. 
On the other hand, the cohomologies of $a_*\Q_X$, respectively
$a_!\Q_X$ are isomorphic to the cohomology $H^*(X)$ and compactly
supported cohomology  $H_c^*(X)$ of $X$ respectively. Thus, for
example, we get the classical fact that for smooth $X$, cohomology and
compactly supported cohomology are naturally dual. 

The derived category $\D^b_c(X)$ has an interesting and
nontrivial heart $\Perv(X)$, called the {\em category of perverse
  sheaves on $X$} even though its objects are not sheaves but
certain complexes defined by support conditions. This category is
preserved by Verdier duality, pushforward under closed inclusions,
(shifted) pullback by smooth maps, and tensor product. If $X$ is
smooth, then $\Q_X[\dim X]\in\Perv(X)$. Also $\Perv(X)$ is artinian and
noetherian, with every object having a finite filtration with
quotients being {\em intersection cohomology sheaves} $\IC_Y(\cL)$ 
where $Y\subset X$ is a smooth, locally closed, irreducible
subvariety, and $\cL$ is a local system on $Y$. 

Despite not being sheaves but complexes, perverse sheaves on complex schemes form a {\em stack}: 
locally defined perverse sheaves can be glued to a global object, given compatible isomorphisms. 

\subsection{The sheaf of vanishing cycles}
\label{subsec:vanishing}

Given a regular function $f\colon U\to\C$ with zero-locus $U_0=f^{-1}(0)$, there is a functor, the {\em vanishing cycle 
functor} $\phi_f\colon \D^b_c(U)\to \D^b_c(U_0)$, mapping $\Perv(U)$ to $\Perv(U_0)$. For any $\F\in\D^b_c(U)$
the image $\phi_f\F$ carries an extra piece
of structure, the monodromy endomorphism $T$. In the case of the constant perverse sheaf $\Q_U[\dim U]$ on a smooth 
variety $U$, the only case needed here, the vanishing cycle perverse sheaf can be viewed as an object 
in $\Perv(X)$ where $X=\Crit(f)$ is the critical locus of $f$. Since Verdier duality commutes with taking vanishing cycles,
the vanishing cycle sheaf of the constant sheaf is equipped with an isomorphism
\[\phi_f\Q_U[\dim U] \cong \DD_X \phi_f\Q_U[\dim U]\]
with its Verdier dual. 

\subsection{Hodge modules}
\label{subsec:Hodge}
Given a smooth complex variety $U$, there is an enriched category, the
category of {\em (polarized) mixed Hodge modules}~\cite{S1, S2}
$\MHM(U)$ with a faithful functor $\rat\colon\MHM(U)\to\Perv(U)$. With a very substantial amount of work, the six-functor
formalism extends to this category. For singular schemes $X$, the category of mixed Hodge modules $\MHM(X)$ needs
to be defined via embedding $X$ into a smooth variety. 

For $U={\rm pt}$, the category of mixed Hodge modules on $U$ is the category $\MHS$ of mixed Hodge structures. 
Thus the advantage of this generalization is that given $\F\in\MHM(X)$, its cohomology and cohomology with compact support, which 
coincide with its pushforward (with compact support) to the point, acquire mixed Hodge structures. 

A further complication is that while for $f\colon U\to\C$, the vanishing cycle functor $\phi_f$ 
extends to mixed Hodge modules, the image objects 
under this functor acquire the action of the monodromy endomorphism. Thus the image lies in a new category of 
{\em monodromic} mixed Hodge modules $\MMHM(U_0)$. One can either treat this category directly as a category of objects with
endomorphism~\cite{S3}, or one can treat it geometrically using an auxiliary disc~\cite{KS2}. Here I~will informally adopt the former
approach, 
so objects of $\MMHM(U)$ are simply pairs of objects $(\F, T)$ of $\MHM(U_0)$ and an endomorphism $T$. However, one needs
to be careful about the definition of various functors. For example, the monodromic Verdier duality functor
$\DD_X^M \colon \MMHM(U_0)\to \MMHM(U_0)^{\rm op}$ involves a certain shift. See~\cite[Sect. 2.10]{BBJS} for a slightly more
detailed but still concise treatment.

The vanishing cycle cohomology of the pair $(U=\C, f=x^2)$ has tensor
square isomorphic to the cohomology $\Q(-1)[-2]$ of the affine
line. This object itself can therefore be considered as a half-twist
object $\Q(-1/2)[-1]\in\MMHS$. One can formally extend the category
$\MMHS$ with a tensor inverse $\Q(1/2)[1]$ to this object; we denote
this new category by $\MMHS_\half$. By pullback, we get extended
categories of monodromic mixed Hodge modules $\MMHM(X)_\half$ on
general complex schemes~$X$.

\subsection{The $K$-group of mixed Hodge structures}\label{sec_Kmixed}
The $K$-group $K(\MMHS_\half)$ is a ring, though one has to be careful with the definition of multiplication in the presence
of monodromy. It contains a distinguished element $\L\in K(\MMHS_\half)$, the cohomology of the affine line $\C$. 
This element admits a square root $\L^\half$, the vanishing cycle
cohomology of the pair $(U=\C, f=x^2)$, which has a formal inverse $\L^{-\half}$ in $K(\MMHS_\half)$. 

There is a ring homomorphism $\chi\colon K(\MMHS_\half)\to\Z$ given by computing the Euler characteristic of
a complex of mixed Hodge structures. This maps $\L$ to $1$ and, to be consistent with conventions used elsewhere, 
$\L^\half$ to $-1$. There are other invariants however which survive. The Poincar\'e polynomial (with coefficients the dimension
of cohomology in different degrees) is not one of them. However, the {\em weight polynomial} or Poincar\'e--Serre polynomial 
gives a homomorphism $\wt\colon  K(\MMHS_\half)\to\Z[q^{\pm\half}]$. This is defined as follows: for an ordinary complex 
of mixed Hodge structures $A^*$, define
\[\wt(A^*;q) = \sum_{j\in \Z} \sum_{i\in \Z} (-1)^i \dim\left(Gr^j_W H^i(A^*)\right) \in \Z[q^{\pm \half}],
\]
with $Gr^j_W H^i(A^*)$ the weight-$j$ part of H$^i(A^*)$; 
let also $\wt(\L^{-\half})=q^{-\half}$. For a monodromic complex, define 
\[\wt(A^*;q) = \wt(A_1^*;q) +q\cdot \wt(A_{\neq 1}^*;q) \in \Z[q^{\pm \half}];
\]
here, $A_1^*$ is the part of the complex where the monodromy acts by eigenvalue $1$, and $A_{\neq 1}^*$ is a complement.
There is also a Hodge version of this polynomial, taking into account the Hodge filtration also,
which I~will not spell out in detail. The Euler characteristic can of course be recovered from the weight polynomial
by setting~$q^\half=-1$.

\end{document}